\documentclass[a4paper,11pt]{amsart}
\usepackage[english]{babel} %
\usepackage{mathtools} %
\usepackage{amssymb}
\usepackage[T1]{fontenc}
\usepackage{lmodern}
\usepackage{mathrsfs} %
\usepackage{slashed} %
\usepackage[babel=true]{microtype} %
\usepackage[autostyle=true]{csquotes} %
\usepackage[dvipsnames]{xcolor} %
\calclayout
\usepackage[
    citestyle=numeric-comp,
    bibstyle=numeric-comp,
    backend=biber,
    url=true,
    doi=true,
    isbn=false,
    giveninits=true,
    maxbibnames=100,
    maxcitenames=10,
    sorting=nyt
]{biblatex}

\DeclareDelimFormat[textcite]{multinamedelim}{\textendash}
\DeclareDelimAlias[textcite]{finalnamedelim}{multinamedelim}

\DeclareSourcemap{
  \maps{
    \map{ %
        \step[fieldset = month, null]
        \step[fieldset = day, null]
        \step[fieldset = language, null]
    }
    \map{ %
        \step[fieldsource = doi, final]
        \step[fieldset = url, null]
    }
    \map{ %
        \step[fieldsource = eprint, final]
        \step[fieldset = url, null]
    }
  }
}

\usepackage[biblatex=true]{embrac} %

\usepackage{tikz}
\usetikzlibrary{cd} %

\usepackage[shortlabels]{enumitem}
\setlist[enumerate,1]{label=\upshape(\arabic*)}
\newlist{myenumi}{enumerate}{1}
\setlist[myenumi,1]{label=\upshape(\roman*)}
\newlist{myenuma}{enumerate}{1}
\setlist[myenuma,1]{label=\upshape(\alph*)}

\numberwithin{equation}{section}
\allowdisplaybreaks[1]

\newtheorem{theorem}{Theorem}[section]
\newtheorem*{theorem*}{Theorem}
\usepackage{thmtools}

\definecolor{Heather}{RGB}{164, 132, 172}
\usepackage[pdfusetitle,bookmarks,pdfpagelabels]{hyperref} %
\hypersetup{
  colorlinks,
  urlcolor=Periwinkle,
  citecolor=Heather,
  linkcolor=teal,
  breaklinks=true,
  final,
}

\declaretheorem[name=Lemma, numberlike=theorem]{lemma}
\declaretheorem[name=Lemma, numbered=no]{lemma*}

\declaretheorem[name=Corollary, numbered=no]{corollary*}
\declaretheorem[name=Proposition, numberlike=theorem]{proposition}
\declaretheorem[name=Definition, numberlike=theorem, style=definition]{definition}

\declaretheorem[name=Remark, numberlike=theorem, style=remark]{remark}

\declaretheorem[name=Theorem]{theoremx}
\declaretheorem[name=Corollary, numberlike=theoremx]{corollaryx}

\usepackage[noabbrev,capitalise]{cleveref} %
\crefdefaultlabelformat{#2\textup{#1}#3}
\crefname{theoremx}{Theorem}{Theorems}
\Crefname{theoremx}{Theorem}{Theorems}
\crefname{corollaryx}{Corollary}{Corollaries}
\Crefname{corollaryx}{Corollary}{Corollaries}

\usepackage{todonotes}

\makeatletter
\providecommand\@dotsep{5}
\def\listtodoname{List of Todos}
\def\listoftodos{\@starttoc{tdo}\listtodoname}
\makeatother
\NewDocumentEnvironment{mytodoenv}{m}{\hypersetup{hidelinks}\textsf{\textbf{#1:}} }{}

\usepackage{mymacros}

\title[Scalar curvature rigidity for products of convex hypersurfaces]{Scalar curvature rigidity for products of convex hypersurfaces}
\author{Samuel Lockman}
\thanks{Funded by the Deutsche Forschungsgemeinschaft (DFG, German Research Foundation) – Project numbers 224262486; %
313840899 (Lockman)  %
– 
523079177; %
427320536; %
390685587 (Zeidler)} %

\address[Lockman]{Universität Regensburg, Fakultät für Mathematik, 93040 Regensburg, Germany}
\email{\href{mailto:samuel.lockman@mathematik.uni-regensburg.de}{samuel.lockman@mathematik.uni-regensburg.de}}
\author{Rudolf Zeidler}
\thanks{(Zeidler) Funded by the European Union (ERC Starting Grant 101116001 – COMSCAL). Views and opinions expressed are however those of the author(s) only and do not necessarily reflect those of the European Union or the European Research Council. Neither the European Union nor the granting authority can be held responsible for them.}
\address[Zeidler]{Universität Potsdam, Institut für Mathematik, Karl-Liebknecht-Str.\ 24--25, 14476 Potsdam, Germany}
\email{\href{mailto:rudolf.zeidler@uni-potsdam.de}{rudolf.zeidler@uni-potsdam.de}}
\urladdr{\href{https://www.rzeidler.eu}{www.rzeidler.eu}}

\hypersetup{
  pdfauthor={Samuel Lockman, Rudolf Zeidler}, %
}
\begin{document}

\begin{abstract}
    Let $N = N_{1} \times \dotsm \times N_{k}$, where each $N_{i} \subset \mathbb{R}^{n_i+1}$ is a closed strictly convex hypersurface. Let $M$ be a Riemannian spin manifold of dimension $n = \dim(N)$, and let $f \colon M \to N$ be an area non-increasing smooth map of non-zero degree.
    We show that $\mathrm{scal}_M \geq \mathrm{scal}_N \circ f$ implies $\mathrm{scal}_M = \mathrm{scal}_N \circ f$. Moreover, if $n \geq 3$ and $N$ has no circle factors, then every such map is a Riemannian isometry.
    In the presence of circle factors, we obtain the corresponding optimal splitting theorem for $M$ and $f$.
    Our results are based on an approach to the index-theoretic part of Llarull's scalar curvature rigidity theorem via Clifford-linear family index theory, which works independently of the parity of the dimension and extends naturally to products.
    This includes a proof of the Geroch conjecture for spin manifolds as the edge case with only circle factors.
\end{abstract}
\maketitle

\section{Introduction}
Llarull's classical rigidity theorem~\cite{Llarull} says that a smooth area non-increasing map \(f \colon M \to \Sphere^n\) of non-zero degree, where \(M\) is a closed connected Riemannian spin manifold of dimension \(n \geq 3\) satisfying \(\scal_{M} \geq n(n-1)\), is an isometry.

In this article, we are interested in generalizations of Llarull's theorem obtained by replacing the target manifold \(\Sphere^n\) with other closed manifolds.
Indeed, \textcite{Goette-Semmelmann} extended Llarull's proof to the following:
Let \((N,g_N)\) be a smooth Riemannian manifold of dimension \(n \geq 3\) with non-negative curvature operator, satisfying \(\tfrac{\scal_{g_N}}{2} g_{N}> \Ric_{g_N} > 0\), and with non-zero Euler characteristic \(\chi(N) \neq 0\). Then any area non-increasing smooth spin map \(f \colon M \to N\) of non-zero degree with \(\scal_M \geq \scal_{g_N} \circ f\) must be an isometry.
The proof of these results is based on carefully analyzing the Schrödinger--Lichnerowicz formula for the Dirac operator on \(M\) twisted by the pullback of the spinor bundle of the target manifold \(N\).
As a second ingredient, the argument needs the existence of non-trivial harmonic sections and here index theory enters. The ostensibly natural index problem in this context leads to \(\deg(f) \chi(N)\) as the relevant invariant which is the reason for the Euler characteristic hypothesis.
As it stands, this argument would not cover the full range of Llarull's original result because it would exclude odd-dimensional spheres.
Instead, Llarull approaches this via an argument involving products with large circles in order to reduce the case of odd-dimensional spheres to the even-dimensional case.
Circle suspension tricks are a common pattern in the application of index theory to scalar curvature problems in order to treat odd-dimensional cases, where there is no classical index.
However, there is usually a more conceptual approach.

\textcite{SpectralFlowLlarull} found a natural argument to prove Llarull's theorem for odd-dimensional spheres via the spectral flow of the Dirac operator twisted by a suitable \(1\)-parameter family of connections; see also~\textcite{baer2025spectralflowatiyahpatodisingerindex}.
Indeed, the spectral flow is the appropriate classical device to extract topological information out of Dirac-type operators in odd dimensions.
In \(\K\)-theoretic language, twisting an operator by a bundle and taking the index implements the pairing \(\K_0(M) \times \K^{0}(M) \to \Z\) between K-homology and K-theory, whereas the spectral flow yields the odd counterpart \(\K_1(M) \times \K^{1}(M) \to \Z\).
This brings the even- and odd-dimensional cases of Llarull's theorem superficially on the same footing and it also treats arbitrary strictly convex hypersurfaces \(N \subset \R^{n+1}\).
However, in the concrete argument, the topological reason providing the existence of a non-trivial harmonic section is still fundamentally different between even- and odd dimensions.
This presents a challenge for further generalizations, for instance to a product like \(\Sphere^3 \times \Sphere^3\): although it is even-dimensional, the twisted Dirac operators that are geometrically relevant in the Llarull
setting admit no obvious non-vanishing classical index.

In this article, we systematically solve the problem for products of strictly convex hypersurfaces via a Clifford-linear multi-parameter version of the spectral flow proof.
That is, for a product \(N = N_1 \times \dotsm \times N_m\), we construct a geometrically useful family of Clifford-module bundles over \(N\) parametrized by the \(m\)-torus \(\Torus^m\) whose family index class lives in \(\K^{m}(\Torus^m)\) and equals the Bott class.
In \cref{sec:index-prelim}, we first describe this index-theoretic setup for a single factor and then exhibit a uniform proof of the classical Llarull theorem in \cref{sec:llarull-classical}.
In \cref{sec:products}, we prove our main theorem:

\begin{theoremx}\label{thm:general-product-splitting}
    Let $N_j \subset (\R^{n_j+1}, g_{\mathrm{eucl}})$ be closed and strictly convex hypersurfaces with \(n_j \geq 2\) for $j = 1, \dotsc, k$ and $\sum_j n_j \eqcolon n$.
    We define \(N = \prod_{j} N_j\) and let the induced product metric be denoted by \(g_N\).

    Let $(M,g)$ be a closed connected Riemannian spin manifold of dimension $n + l$ with \(l \geq 0\).
    Let \(T = \Sphere^1_{r_1} \times \dotsm \times \Sphere^{1}_{r_l}\) be a product of \(l\) circles and \(g_T\) be the corresponding flat product metric on \(T\).
    Suppose that $f \colon M \to N \times T$ is a smooth map of non-zero degree such that
    \begin{itemize}
        \item \(f \colon (M,g) \to (N \times T,g_N \oplus g_{T})\) is area non-increasing,
        \item $\scal_g \geq \scal_{g_N \oplus g_T} \circ f = \scal_{g_N} \circ \proj_N \circ f$
    \end{itemize}
     
    Then
    \begin{description}
        \item[if \(n \geq 3\)] There exists a flat torus \((F,g_F)\), a smooth \(1\)-Lipschitz map of non-zero degree \(h \colon (F,g_F) \to (T,g_T)\), and a Riemannian isometry \(\Phi \colon (M,g) \to (N \times F, g_N \oplus g_F)\) such that the diagram %
    \begin{equation*}
        \begin{tikzcd}
            M \ar[r, "\Phi", "\cong"'] \ar[rd, "f"'] & N \times F \ar[d, "\id_N \times h"] \\
            & N \times T
        \end{tikzcd}
    \end{equation*}
    commutes.
    \item[if \(n = 2\)] There exists a flat torus \((F,g_F)\), a smooth surface $(S,g_S)$, a smooth \(1\)-Lipschitz map of non-zero degree \(h \colon (F,g_F) \to (T,g_T)\), an area-density-preserving diffeomorphism $\phi \colon S \to N$ that preserves the Gauß curvatures in the sense that \(\Gauss_S = \Gauss_N \circ \hspace{0.85mm} \phi\), and a Riemannian isometry \(\Phi \colon (M,g) \to (S \times F, g_S \oplus g_F)\) such that the diagram %
    \begin{equation*}
        \begin{tikzcd}
            M \ar[r, "\Phi", "\cong"'] \ar[rd, "f"'] & S \times F \ar[d, "\phi \times h"] \\
            & N \times T
        \end{tikzcd}
    \end{equation*}
    commutes.
    
    \item[if \(k = n = 0\)] \((M,g)\) is isometric to a flat torus.
    \end{description}
\end{theoremx}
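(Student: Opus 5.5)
The plan is to run a Llarull-type argument with the Clifford-linear family index of \cref{sec:index-prelim}. Each convex factor $N_j$ gets the family of Clifford-module bundles parametrized by $\Sphere^1$ whose family index is the Bott class in $\K^1(\Sphere^1)$. Each circle factor of $T$ gets the standard flat Poincaré-type family, also parametrized by $\Sphere^1$. Taking external products over all $k+l$ factors gives a family $E_t \to N \times T$, $t \in \Torus^{k+l}$, whose family index class in $\K^{k+l}(\Torus^{k+l})$ is the Bott class. We pull this back along $f$ and twist the $\Cl_{n+l}$-linear spinor Dirac operator of $M$ by $f^*E_t$. Multiplicativity of the index and functoriality under maps of non-zero degree show that the family index of the twisted operators on $M$ is $\deg(f)$ times the Bott class, which is non-zero. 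Hence for at least one parameter $t_0$ the twisted operator $D_{t_0}$ has non-trivial kernel; otherwise the family would be invertible and its index would vanish.

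Next comes the Schrödinger--Lichnerowicz estimate. On the twisted bundle we have $D_t^2 = \nabla^*\nabla + \tfrac{\scal_g}{4} + \mathcal{R}^{f^*E_t}$. For a convex hypersurface the curvature term of the bundle built from the shape operator is controlled by the principal curvatures, exactly as in the single-factor argument of \cref{sec:llarull-classical}. Area non-increasing of $f$ then gives the pointwise bound $\mathcal{R}^{f^*E_t} \geq -\tfrac14 \scal_{g_N}\circ\proj_N\circ f$. The circle factors contribute only flat connections, with no curvature term. The key check is that on products the cross terms between factors cancel, so the estimate is additive in the factors. With $\scal_g \ge \scal_N\circ f$, a harmonic section $u$ of $D_{t_0}$ satisfies $\int|\nabla u|^2 \le 0$. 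Therefore $u$ is parallel, and all inequalities are equalities pointwise wherever $u \neq 0$, which is everywhere because $u$ is parallel.

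The rigidity step extracts geometry from the equality case. Equality in the curvature estimate, as in Llarull and Goette--Semmelmann, forces $\scal_g = \scal_N\circ f$. Using strict convexity, all principal curvatures are positive, so equality also forces $df$ restricted to the "$N$-directions" to be an isometry in the appropriate sense and $df$ to be area-preserving there. The existence of a parallel twisted spinor further forces the horizontal distribution $\ker(d(\proj_N\circ f))$ to be parallel and flat. More precisely, the circle directions have the parallel $1$-forms $f^*d\theta_i$, and the kernel of $d(\proj_N \circ f)$ is spanned by parallel vector fields. De Rham decomposition, together with the fact that $M$ is closed, then gives the splitting $M \cong S\times F$ with $F$ a flat torus. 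In this splitting $\proj_T\circ f$ factors through a $1$-Lipschitz map $h$ of non-zero degree, and $\proj_N\circ f$ factors through a local isometry $S\to N$. For $n\ge3$, area non-increasing plus equality forces a genuine isometry, as in Llarull's argument using $n\ge3$. Since $N$ is simply connected when each $n_j\ge2$ and the degree is non-zero, the local isometry is a global isometry. For $n=2$ only the area density and Gauß curvature can be matched, which gives the stated weaker conclusion. The case $k=n=0$ is the Geroch edge case: here $M$ is flat, and the parallel $1$-forms show it is a flat torus.

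I expect the main obstacle to be the equality analysis on products. One has to show that a single parallel section of the product-twisted bundle forces the "mixed" directions (between different $N_j$, and between $N$ and $F$) to decouple. One also needs an actual Riemannian product, not just an equality of scalar curvatures. My first attempt would be to use that the twisting bundle's Clifford action by $d\nu_j$ (the Gauß maps) and by $d\theta_i$ produces parallel endomorphisms acting on $u$. From these I would build parallel projections onto $\ker d(\proj_{N_j}\circ f)^\perp$, and then invoke de Rham.
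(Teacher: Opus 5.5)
Your index-theoretic step and the Schrödinger--Lichnerowicz estimate match the paper. In fact there are no cross terms to cancel: the curvature of the exterior product connection is a sum over factors. Each factor is bounded via $\zeta_{j,\alpha,\beta}=\Lambda^2(dp)^\top(\epsilon_{j,\alpha}\wedge\epsilon_{j,\beta})$ with $|\zeta_{j,\alpha,\beta}|\le 1$, where $p=\proj_N\circ f$. The gaps are in the rigidity part.

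\textbf{The equality case.} Equality only yields $|\zeta_{j,\alpha,\beta}|=1$ for pairs of principal directions \emph{within one factor} $N_j$. It does not give $\lambda_i\lambda_j=1$ for all pairs, so ``as in Llarull's argument'' does not apply. For $N=\Sphere^2\times\Sphere^2$, equality only says that $P=dp\,(dp)^\top$ preserves each $TN_j$ with determinant $1$ there. The paper closes this with a separate linear-algebra lemma:
\begin{itemize}
\item $\Lambda^2P$ fixes every within-factor bivector, so each $TN_j$ is $P$-invariant with $\det(P|_{TN_j})=1$, and $P|_{TN_j}=\mathrm{id}$ if $n_j\ge 3$. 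Hence $\det P=1$.
\item The global area bound gives $\sigma_1\sigma_2\le 1$ for the top two eigenvalues of $P$. For $n\ge 3$ this forces $P=\mathrm{id}$.
\item The same lemma, applied to the full map $(dp,\,d(\proj_T\circ f))$, shows that $d(\proj_T\circ f)$ vanishes on $\mathcal H=\ker(dp)^\perp$.
\end{itemize}
That last fact is exactly what makes $\proj_T\circ f$ factor through the fibre, and you assert the factorization without justifying it.

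\textbf{The splitting step.} This is the more serious gap. Your claim that the forms $f^*d\theta_i$ are parallel is false. Take $M=\Sphere^3\times\Sphere^1_R$ with $R>1$ and $f=\mathrm{id}\times h$, where $h$ has degree one, $|h'|\le 1$ and $h'$ is non-constant. All hypotheses hold, yet $f^*d\theta=h'\,dt$ is not parallel. So nothing in your sketch shows that $\mathcal V=\ker dp$ is parallel. (Note that $\ker dp$ is the vertical distribution, not the horizontal one.) Your fallback via parallel endomorphisms is not yet an argument.

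The missing ingredient is a Ricci identity derived from the parallel section $u$:
\begin{itemize}
\item Because $u$ is parallel, the spinorial Ricci formula expresses $\clm(\mathrm{Ric}(X))u$ through the twisting curvature $R^E(X,\cdot)$.
\item That curvature vanishes on $\mathcal V$.
\item Combine this with the equality-case identity $\clms_j(\epsilon_{j,\alpha})\clms_j(\epsilon_{j,\beta})u=\clm(\zeta_{j,\alpha,\beta})u$.
\end{itemize}
This gives $\mathrm{Ric}_M=\mathrm{Ric}_N\circ(dp\otimes dp)$ for $n\ge 3$, a Gauss-curvature analogue for $n=2$, and $\mathrm{Ric}_M=0$ for $n=0$.

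Then apply Bochner to the \emph{harmonic representatives} $\tau_i$ of $f^*[d\theta_i]$. Together with $\mathrm{Ric}_N>0$, this gives $\nabla\tau_i=0$ and $\tau_i^\sharp\in\mathcal V$. Since $\deg f\neq 0$, we have $f^*[d\theta_1\wedge\dots\wedge d\theta_l]\neq 0$. Hence the $\tau_i^\sharp$ are pointwise independent and frame $\mathcal V$.

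Finally, de Rham decomposition only splits the universal cover, and closedness of $M$ does not fix this. The paper gets the global product $M\cong N\times F$ from Ehresmann's theorem for the submersion $p$, plus the flat connection $\mathcal H$ over the simply connected $N$. The fibre $F$ is a flat torus because it carries the parallel frame $\tau_i^\sharp$.
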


Note that the reason we treat circle factors separately in the statement is because they play a different role than the others for the rigidity conclusion.
For the index-theoretic part of the proof, they are in fact treated in the same way.
If there are no circle factors present, then the theorem simplifies to a Llarull-type statement:
\begin{corollaryx}\label{thm:llarull-product}
    Let $N_j \subset (\R^{n_j+1}, g_{\mathrm{eucl}})$ be closed and strictly convex hypersurfaces with \(n_j \geq 2\) for $j = 1, \dotsc, k$ and suppose that $\sum_j n_j \eqcolon n \geq 3$.
    Let $(M,g)$ be a closed connected Riemannian spin manifold of dimension $n$.
    Then any area non-increasing smooth map \(f \colon M \to N \coloneq \prod_{j} N_j\) of non-zero degree with $\scal_g \geq \scal_{g_N} \circ f$ is an isometry.
\end{corollaryx}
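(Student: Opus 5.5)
The corollary is the special case \(l = 0\) of \cref{thm:general-product-splitting}, so the plan is to specialize the theorem and check that its conclusion collapses to \enquote{\(f\) is an isometry}.

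Take \(l = 0\). Then \(T = \prod_{i=1}^{0} \Sphere^1_{r_i}\) is the one-point space \(\{\ast\}\) with the zero metric. Hence \(N \times T = N\), and \(g_N \oplus g_T = g_N\). Moreover \(\dim M = n = n + l\). The two hypotheses of the theorem are then literally the hypotheses of the corollary: \(f \colon M \to N\) has non-zero degree, is area non-increasing, and satisfies \(\scal_g \geq \scal_{g_N} \circ f\). Since \(n \geq 3\), we are in the first case of the theorem. It yields a flat torus \((F, g_F)\), a \(1\)-Lipschitz map \(h \colon F \to T\) of non-zero degree, and a Riemannian isometry \(\Phi \colon M \to N \times F\) with \(f = (\id_N \times h) \circ \Phi\).

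The one point to check is that \(F\) must be a point. The isometry \(\Phi\) gives \(\dim F = \dim M - \dim N = 0\). A connected flat torus of dimension zero is a point; connectedness of \(F\) also follows from the connectedness of \(M\) via \(\Phi\). Then \(h\) is the identity of a point, so \(\id_N \times h\) is the canonical identification \(N \times F \cong N\), an isometry. It follows that \(f = (\id_N \times h) \circ \Phi\) is a composition of Riemannian isometries, hence a Riemannian isometry \(M \to N\).

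I expect no real obstacle here. The only care needed is to confirm that the theorem, as formulated, allows \(l = 0\) (it explicitly permits \(l \geq 0\)) and treats an empty product of circles as a point. All of the substantive work, namely the Clifford-linear family index computation and the rigidity analysis, is contained in the proof of \cref{thm:general-product-splitting}.
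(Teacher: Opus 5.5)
Your proposal is correct and follows the paper, which obtains this corollary directly from \cref{thm:general-product-splitting} with $l = 0$ (no circle factors). In that case $T$ is a point and the fiber $F$ is a $0$-dimensional torus, connected because $M$ is, hence also a point, so $f = (\id_N \times h) \circ \Phi$ is an isometry exactly as you argue.
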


We note that the prior literature already covers many cases of \cref{thm:general-product-splitting,thm:llarull-product}.
If all factors are even-dimensional, \cref{thm:llarull-product} is contained in the \textcite{Goette-Semmelmann} result, because then \(\chi(N) = 2^k \neq 0\) and the condition \(\tfrac{\scal_{g_N}}{2} g_{N}> \Ric_{g_N} > 0\) is automatically satisfied due to the strict convexity assumption for each factor if the total dimension is \(\geq 3\).
Similarly, if we have a product of even-dimensional factors with some one-dimensional factors, then the corresponding case of \cref{thm:general-product-splitting} is covered by \textcite{riedler-tony} as a consequence of their more sophisticated product splitting result for general scalar-rigid submersions with non-vanishing higher-index degree (see in particular \cite[Example C]{riedler-tony}).
As we already discussed, the case of a single odd-dimensional factor in \cref{thm:llarull-product} was treated in \cite{SpectralFlowLlarull,baer2025spectralflowatiyahpatodisingerindex} via the classical spectral flow.
Moreover, by combining minimal slicing techniques with the classical Llarull theorem, cases of the form \(N = \Sphere^{n} \times \mathrm{T}^l\) with \(n + l \leq 7\) were treated by \textcite{hao-shi-sun:llarull-slicing,chow}.

In the edge case \(n = 0\), the condition that the map be area non-increasing can be dropped without loss of generality, so we recover the classical result of \textcite{schoen-yau} and \textcite{Gromov-Lawson} on the Geroch conjecture in the spin setting:
\begin{corollary*}
    Let \(M\) be a closed connected spin manifold which admits a map of non-zero degree to the torus of the same dimension.
    If \(g\) is a Riemannian metric on \(M\) with non-negative scalar curvature, then \((M,g)\) is a flat torus.
\end{corollary*}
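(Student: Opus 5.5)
The corollary is the special case \(k = n = 0\) of \cref{thm:general-product-splitting}. My plan is to reduce to that case and check that its hypotheses hold.

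Let \(M\) have dimension \(l\) and let \(f_0 \colon M \to \Torus^l\) have non-zero degree. Since \(f_0\) is only assumed continuous (or smooth after homotopy), I first replace it by a smooth homotopic map, which has the same degree. Put \(T = \Sphere^1_{r} \times \dotsm \times \Sphere^1_{r}\), the product of \(l\) circles of a common radius \(r > 0\), with its flat product metric \(g_T\). Identifying \(\Torus^l\) with \(T\) gives a smooth map \(f \colon M \to T\) of non-zero degree.

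\textbf{Area non-increasing.} Here the edge case needs a scaling argument. Because \(M\) is compact, the norm of \(\Lambda^2 \mathrm{d}f\) with respect to the fixed metrics \(g\) and \(g_{T}\) at \(r = 1\) is bounded by some \(C\). Rescaling the circles to radius \(r\) multiplies the induced metric on \(T\) by \(r^2\), so area on \(T\) scales by \(r^2\). Choosing \(r \leq C^{-1/2}\) therefore makes \(f\) area non-increasing. The degree and the underlying smooth map are unchanged. This is the precise sense in which the area condition "can be dropped without loss of generality" when \(n = 0\).

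\textbf{Scalar curvature.} With \(k = 0\), \(N\) is a point and \(\scal_{g_N \oplus g_T} \equiv 0\). The hypothesis \(\scal_g \geq 0\) is then exactly the required inequality \(\scal_g \geq \scal_{g_N \oplus g_T} \circ f\).

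\textbf{Conclusion.} \(M\) is closed, connected and spin by assumption, so all hypotheses of \cref{thm:general-product-splitting} hold with \(k = n = 0\). Its third case gives that \((M,g)\) is isometric to a flat torus.

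The only point needing care is that the theorem's formulation allows \(k = 0\), that is, \(N\) a point, and that its index-theoretic part really covers the pure-torus situation. The paper states that circle factors are treated on the same footing as the other factors in the index argument. In that situation the family index in \(\K^{l}(\Torus^l)\) is the Bott class paired against \([M]\), which gives \(\deg(f)\) times a generator, and this is non-zero. I would take this from the theorem rather than reprove it here.
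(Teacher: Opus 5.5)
Your proposal is correct and is the paper's own argument: the corollary is the case \(k = n = 0\) of \cref{thm:general-product-splitting}. The paper's remark that the area non-increasing hypothesis can be dropped without loss of generality is justified exactly by your shrinking of the circle radii. Alternatively, for \(k = 0\) the twisting connections are flat, so that hypothesis is never actually used in the proof.
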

We note that the approach to positive scalar curvature obstructions via the Rosenberg index~\cite{Rosenberg:PSCNovikovI} also reduces to a family index in the case of the torus.
However, the family index we use here is a slightly different one; see \cref{rem:family-index-comparison}.

In sum, we provide a uniform index-theoretic framework to prove Llarull- and Geroch-type theorems for products of strictly convex hypersurfaces, including products of arbitrary many round spheres and tori.
This framework both recovers the existing results and extends to the missing cases of multiple factors of dimension at least \(2\) which are not all even-dimensional.

\section{Index-theoretic preliminaries} \label{sec:index-prelim}

We start with preliminaries concerning Clifford algebras, family index theory, K-homology and K-theory.
All of this can be conveniently phrased in terms of Kasparov's bivariant K-theory for \(\Z/2\)-graded \textCstar-algebras \cite{Kasparov:EquivariantKK}, which is the point of view we will take here.

Let \(\CCl_{p,q}\) denote the complex Clifford algebra on \(\R^{p,q} \coloneq \R^p \oplus \R^q\) with the relation \((v \oplus w)^2 = (\abs{w}^2 - \abs{v}^2) 1\) for \(v\oplus w \in \R^{p,q}\).
Given an \(m\)-dimensional Riemannian spin manifold \(M\), we will consider the \(\CCl_{m,0}\)-linear Dirac operator acting on the bundle \(\SpinBdl_M = \mathrm{P}_{\Spin(m)}(M) \times_{\Spin(m)} \CCl_{m,0}\), where \(\Spin(m) \subset \CCl_{m,0}\) acts via left multiplication.
This Dirac operator defines a fundamental class \([\Dirac_{M}] \in \K_{m}^\lf(M) = \KK(\Ct_0(M), \CCl_{m,0})\).
Dually, we have the Bott class \(\beta_M \in \K^{m}_{\cpt}(M)\) which satisfies \([\Dirac_M] \cap \beta_M = 1 \in \K^0(M)\) with respect to the cap product \(\cap\), which can be seen as the map making the diagram
\[
    \begin{tikzcd}
        \K_{m}^{\lf}(M) \otimes \K^{m}_{\cpt}(M) \ar[r, equal] \ar[dd, "\cap"] & \KK(\Ct_0(M), \CCl_{m,0}) \otimes \KK(\C, \Ct_0(M) \otimes \CCl_{0,m}) \ar[d, "\id \otimes \mathrm{diag}^\ast"]\\
        & \KK(\Ct_0(M), \CCl_{m,0}) \otimes \KK(\C, \Ct_0(M) \otimes \Ct(M) \otimes \CCl_{0,m}) \ar[d, "\otimes_{\Ct_0(M)}"] \\
        \K^{0}(M) \ar[r, equal] & \KK(\C, \Ct(M) \otimes \CCl_{m,m}).
    \end{tikzcd}
\]
commute. The Bott class on a connected manifold \(M\) has the property that \(\beta_{M} = \iota_{!} \beta_{B}\), where \(\iota \colon B \hookrightarrow M\) is any coordinate ball.
This implies that for a proper smooth map \(f \colon M' \to M\) between connected spin \(m\)-manifolds, we have
\begin{equation}
    f^\ast \beta_{M} = \deg(f) \beta_{M'} \in \K^{m}_{\cpt}(M'). \label{eq:bott-degree-formula}
\end{equation}
Moreover, both fundamental classes and Bott classes are compatible with exterior products, that is, \([\Dirac_{M_1 \times M_2}] = [\Dirac_{M_1}] \times [\Dirac_{M_2}]\) and 
\begin{equation}
    \beta_{M_{1} \times M_{2}} = \beta_{M_1} \times \beta_{M_2} \label{eq:bott-product}
\end{equation}
if \(M_1\) and \(M_2\) are both spin manifolds.

\begin{remark} \label{rem:bott_classes}
Now consider the case of a closed manifold \(M\) and note that a bundle \(E \to M\) of \(\Z/2\)-graded right \(\CCl_{p,q}\)-modules represents a class \([E] \in \K^{q-p}(M) = \KK(\C, \Ct(M) \otimes \CCl_{p,q})\).
The Bott class \(\beta_{M}\) can be explicitly described via a \(\CCl_{0,m}\)-bundle as follows.
Take a coordinate unit disk \(\Disk_{1}(0) \cong D \subset M\) and let \(E \to M\) be the bundle obtained by clutching the trivial \(\CCl_{0,m}\)-modules on \(D\) and \(M \setminus \interior{D}\), respectively, along \(\Sphere^{m-1} \cong \partial D\) via the map \(\gamma \colon \Sphere^{m-1} \to \Aut^{(0)}_{\CCl_{0,m}}(\CCl_{0,m})\), \(x \mapsto (a \mapsto e_{m} x a)\), where we view \(\CCl_{0,m}\) as a \(\Z/2\)-graded right module over itself.
The role of the fixed coordinate vector \(e_{m}\) in the definition of \(\gamma\) is to implicitly identify the opposite module grading on \(\CCl_{0,m}\) with the standard one.
\end{remark}

Next we turn to family index theory.
Consider a submersion \(p \colon Y \to X\) between closed spin manifolds, where \(\dim(Y) = n +k\) and \(\dim(X) = k\).
The family of vertical \(\CCl_{n,0}\)-linear Dirac operators on the fibers of \(p\) defines a fundamental class \([\Dirac_{p}] \in \KK(\Ct(Y), \Ct(X) \otimes~\CCl_{n,0})\) which allows us to decompose the fundamental class \([\Dirac_{Y}] \in \K_{n+k}(Y)\) as
\[
[\Dirac_{Y}] = [\Dirac_{p}] \otimes_{\Ct(X)} [\Dirac_X].
\]
Now given a bundle \(E \to Y\) of \(\Z/2\)-graded right \(\CCl_{0,n+k}\)-modules together with a family of \(\CCl_{0,n+k}\)-linear vertical connections \((\nabla_x)_{x \in X}\) on \(E\), parametrized by $X$, we can twist the family of vertical Dirac operators with \((E, \nabla_\bullet)\) and obtain a family of Dirac-type operators \(\left(\Dirac_{p^{-1}(x), E|_{p^{-1}(x)}}\right)_{x \in X}\) which has a family index
\begin{gather*}
\ind_{X} \left(\Dirac_{p^{-1}(\bullet), E|_{p^{-1}(\bullet)}}\right) = p_{!}([E]) \coloneq [E] \otimes_{\Ct(Y)} [\Dirac_p] \\
\in \KK(\C, \Ct(X)\otimes \CCl_{n, n+k}) = \K^k(X).
\end{gather*}
Concretely, this is the index class of the twisted vertical Dirac operator acting as an unbounded \(\Ct(X) \otimes \CCl_{n,n+k}\)-linear operator on \(\Lp^2_{\Ct(X)}(\SpinBdl_{Y, \mathrm{vert}} \otimes E)\), which is the completion of \(\Ct^\infty(Y; \SpinBdl_{Y, \mathrm{vert}} \otimes E)\) subject to the \(\Ct(X, \CCl_{n,n+k})\)-valued fiberwise-\(\Lp^2\) scalar product defined by integrating the pointwise \(\CCl_{n,n+k}\)-valued scalar product over the fibers.
From this description one concludes the following:
\begin{lemma}\label{lem:invertible_everywhere_vanishes}
    Suppose that \(\Dirac_{p^{-1}(x), E|_{p^{-1}(x)}}\) is invertible for every \(x \in X\).
    Then the family index class \(\ind_{X} \left(\Dirac_{p^{-1}(\bullet), E|_{p^{-1}(\bullet)}}\right) \in \K^{k}(X)\) vanishes.
\end{lemma}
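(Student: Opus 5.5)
The plan is to work directly with the unbounded Kasparov module that represents the family index, \((\Lp^2_{\Ct(X)}(\SpinBdl_{Y,\mathrm{vert}} \otimes E), \Dirac_\bullet)\), where \(\Dirac_\bullet\) denotes the family \(\Dirac_{p^{-1}(x), E|_{p^{-1}(x)}}\). We show it is degenerate up to operator homotopy, and therefore represents zero in \(\KK(\C, \Ct(X)\otimes\CCl_{n,n+k})\).

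The first step is to show that \(\Dirac_\bullet\) is invertible as a regular self-adjoint operator on the Hilbert \(\Ct(X)\otimes\CCl_{n,n+k}\)-module. Fiberwise invertibility says that each \(\Dirac_x\) has a spectral gap around \(0\). The family depends smoothly on \(x\), since the connections \(\nabla_x\) and the fiber metrics vary smoothly. The fibers are closed and \(X\) is compact, and elliptic operators have discrete spectrum that varies continuously with \(x\). Together these give a uniform gap: there is \(c>0\) with \(\Dirac_x^2 \geq c^2\) for all \(x\).

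To make this rigorous, I would trivialize the fibration locally over small open sets in \(X\) and regard \(\Dirac_x\) as a continuous family of operators on a fixed Sobolev space \(H^1 \to \Lp^2\). Continuity in norm from \(H^1\) to \(\Lp^2\) holds because the coefficients vary smoothly. Invertibility is an open condition, and the inverses then vary continuously in norm. Gluing over a finite cover, we obtain that \(\Dirac_\bullet^{-1}\) is a bounded adjointable operator on the Hilbert \(\Ct(X)\)-module, namely the continuous field of the fiberwise inverses.

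The second step is the standard degeneracy argument. Pass to the bounded transform \(F = \Dirac_\bullet(1+\Dirac_\bullet^2)^{-1/2}\). Then take the operator homotopy \(F_t\) given by functional calculus with \(\chi_t(\lambda) = (1-t)\lambda(1+\lambda^2)^{-1/2} + t\,\mathrm{sign}(\lambda)\). This is legitimate because the spectrum of \(\Dirac_\bullet\) avoids \((-c,c)\), so \(\mathrm{sign}\) is continuous there. Each \(F_t\) is odd and \(\CCl\)-linear, it is self-adjoint, and \(1-F_t^2\) is compact because the resolvent is compact: the fibers are closed, so \((1+\Dirac^2)^{-1}\) is \(\Ct(X)\)-compact. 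Hence the homotopy is a valid operator homotopy of Kasparov modules. At \(t=1\) we get \(F_1^2 = 1\) exactly. Such a module is degenerate, and degenerate modules represent \(0\) in \(\KK\).

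The main obstacle is the uniformity and continuity in the first step: upgrading pointwise invertibility to invertibility of the regular operator on the \(\Ct(X)\)-module. The compactness of \(X\) together with the openness of invertibility in the norm-continuous family \(H^1\to\Lp^2\) is exactly what handles this.
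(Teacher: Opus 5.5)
Your proposal is correct and is the argument the paper has in mind. The paper gives no separate proof and only remarks that the lemma follows from realizing the family index as the class of the unbounded $\Ct(X)\otimes\CCl_{n,n+k}$-linear operator on $\Lp^2_{\Ct(X)}(\SpinBdl_{Y,\mathrm{vert}}\otimes E)$. Your two steps, a uniform spectral gap via compactness of $X$ and then an operator homotopy to the degenerate module with $F_1^2=1$, supply exactly the standard details behind that remark. Note that your argument only needs continuity (not smoothness) of the family in $x$. This matters because the mapping-torus connection family used later is only piecewise smooth in the parameter.
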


Moreover, non-vanishing of the total twisted index class implies non-vanishing of the family index class:
\begin{lemma}\label{lem:total_nonvanish_implies_family-nonvanish}
    If \(\innp{[\Dirac_Y]}{[E]} \neq 0 \in \Z\), then \(\ind_{X} \left(\Dirac_{p^{-1}(\bullet), E|_{p^{-1}(\bullet)}}\right) \neq 0 \in \K^k(X)\).
\end{lemma}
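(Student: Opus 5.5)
The plan is to use the factorization of the fundamental class recorded above, \([\Dirac_Y] = [\Dirac_p] \otimes_{\Ct(X)} [\Dirac_X]\), together with associativity of the Kasparov product. We rewrite the total pairing as a pairing over the base:
\[
\innp{[\Dirac_Y]}{[E]} = [E] \otimes_{\Ct(Y)} \bigl([\Dirac_p] \otimes_{\Ct(X)} [\Dirac_X]\bigr) = \bigl([E] \otimes_{\Ct(Y)} [\Dirac_p]\bigr) \otimes_{\Ct(X)} [\Dirac_X] = \innp{[\Dirac_X]}{p_!([E])}.
\]
Here the total pairing lives in \(\KK(\C, \CCl_{n+k, n+k}) \cong \KK(\C,\C) = \Z\), via Morita equivalence \(\CCl_{n+k,n+k} \cong \mathrm{M}_{2^{n+k}}(\C)\). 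The last expression is the pairing of \([\Dirac_X] \in \KK(\Ct(X), \CCl_{k,0})\) with the class \(p_!([E]) \in \KK(\C, \Ct(X) \otimes \CCl_{n,n+k})\), which is exactly \(\ind_X\bigl(\Dirac_{p^{-1}(\bullet), E|_{p^{-1}(\bullet)}}\bigr)\).

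The key steps, in order, are as follows.

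\begin{enumerate}
\item Check that the tensor factors of Clifford algebras line up. Taking the product of \(p_!([E])\) with \([\Dirac_X]\) over \(\Ct(X)\) gives \(\CCl_{n,n+k} \otimes \CCl_{k,0} \cong \CCl_{n+k,n+k}\), matching the target of \(\innp{[\Dirac_Y]}{[E]}\). This is up to the canonical graded isomorphism and the ordering of tensor factors, which must be used consistently.
\item Invoke associativity of the Kasparov product. The parentheses \([E] \otimes_{\Ct(Y)} ([\Dirac_p] \otimes_{\Ct(X)} [\Dirac_X])\) can be moved because \([\Dirac_p]\) is a \(\Ct(Y)\)–\(\Ct(X)\) bimodule class. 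In Kasparov's formalism the exterior \(\CCl\) factors are carried along with \(\otimes 1\), so this is a direct application of the associativity theorem.
\item Conclude. The map \(\K^k(X) \to \Z\), \(x \mapsto \innp{[\Dirac_X]}{x} = x \otimes_{\Ct(X)} [\Dirac_X]\), is a group homomorphism and sends \(0\) to \(0\). So if the family index vanished, the total pairing would vanish. Contrapositively, \(\innp{[\Dirac_Y]}{[E]} \neq 0\) forces \(p_!([E]) \neq 0\), which is the claim.
\end{enumerate}

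The main obstacle is bookkeeping rather than substance. One must make sure that the concrete family-index class described above, built from the twisted vertical Dirac operators on \(\Lp^2_{\Ct(X)}(\SpinBdl_{Y,\mathrm{vert}} \otimes E)\) with the chosen family of vertical connections \(\nabla_\bullet\), really represents the Kasparov product \([E] \otimes_{\Ct(Y)} [\Dirac_p]\). This requires two checks:
\begin{itemize}
\item independence of the choice of connections, since the space of such connections is affine and hence contractible, so all choices are homotopic;
\item the standard connection criterion for unbounded Kasparov products.
\end{itemize}
Given that this identification is built into the definition of \(p_!\) in the text, the lemma itself reduces to the associativity computation above.
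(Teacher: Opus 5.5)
Your proposal is correct and follows essentially the same route as the paper: factor \([\Dirac_Y] = [\Dirac_p] \otimes_{\Ct(X)} [\Dirac_X]\), use associativity of the Kasparov product, and argue by contraposition that a vanishing family index \([E] \otimes_{\Ct(Y)} [\Dirac_p]\) forces the total pairing to vanish. Your additional bookkeeping (Clifford degrees, independence of the connection choice) is consistent with the setup, though the paper treats the identification of the concrete family index with \(p_!([E])\) as part of the definition.
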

\begin{proof}
Suppose by contraposition that \(0 = \ind_{X} \left(\Dirac_{p^{-1}(\bullet), E|_{p^{-1}(\bullet)}}\right) = [E] \otimes_{\Ct(Y)} [\Dirac_p]\).
Then
\begin{align*}
    \innp{[\Dirac_Y]}{[E]} &= [E] \otimes_{\Ct(Y)} [\Dirac_Y] \\
    &= [E] \otimes_{\Ct(Y)} \left( [\Dirac_p] \otimes_{\Ct(X)} [\Dirac_X] \right) \\
    &= \left( [E] \otimes_{\Ct(Y)} [\Dirac_p] \right) \otimes_{\Ct(X)} [\Dirac_X] \\
    &= 0 \otimes_{\Ct(X)} [\Dirac_X] = 0. \qedhere
\end{align*}
\end{proof}

We now apply the above discussion in the setting \(X = \Torus \coloneq [0,1]/(0\sim 1)\) and \(Y =  \Torus \times \Sphere^n\), \(p = \proj_1\).

\begin{definition}\label{defi:mapping_torus_bundle}
Let \(\Sphere^n \subset \R^{n+1}\) and let \(\gamma \colon \Sphere^n \to \Aut^{(0)}_{\CCl_{0,n+1}}(\CCl_{0,n+1})\) be the map \(x \mapsto (a \mapsto e_{n+1} x a)\), where we view \(\CCl_{0,n+1}\) as a \(\Z/2\)-graded right module over itself.
Then let \(\tilde{B}^{n+1} \to \Torus \times \Sphere^{n}\) denote the bundle of graded \(\CCl_{0,n+1}\)-modules obtained by gluing the endpoints of the trivial \(\CCl_{0,n+1}\)-bundle on \([0,1] \times \Sphere^n\) via \(\gamma\).
This can be viewed as a family of bundles \(\{\tilde{B}^{n+1}_{\bullet} \to \Sphere^n\}_{\bullet \in \Torus}\), which for convenience will be denoted by \(\tilde{B}^{n+1}_{\bullet} \to \Sphere^n\) or simply \(\tilde{B}^{n+1}_{\bullet}\).
\end{definition}
We twist the \(\CCl_{n,0}\)-linear Dirac operator on \(\Sphere^n\) with the family \(\tilde{B}^{n+1}_{\bullet}\) and denote the corresponding family of twisted Dirac operators by \(\Dirac_{\Sphere^{n}, \tilde{B}^{n+1}_{\bullet}}\).
We obtain the family index class \(\ind_{\Torus}(\Dirac_{\Sphere^{n}, \tilde{B}^{n+1}_{\bullet}}) = (\proj_1)_![\tilde{B}^{n+1}] \in \KK(\C, \Ct(\Torus) \otimes \CCl_{n,n+1}) = \K^1(\Torus)\).

\begin{lemma} \label{lem:family_index_main_computation}
    The bundle \(\tilde{B}^{n+1}\) represents the Bott class of \(\Torus \times \Sphere^n\), that is,
    \begin{equation} \label{eq:S^1timesS^n-bott-bundle}
        [\tilde{B}^{n+1}] = \beta_{\Torus \times \Sphere^n} \in \K^{n+1}(\Torus \times \Sphere^n).
    \end{equation}
    In particular, \(\innp{[\Dirac_{\Torus \times \Sphere^n}]}{[\tilde{B}^{n+1}]} = 1 \) and the family index class \(\ind_{\Torus}(\Dirac_{\Sphere^{n}, \tilde{B}^{n+1}_{\bullet}}) \in \K^1(\Torus)\) does not vanish for any choice of connection family \(\nabla_\bullet\) on \(\tilde{B}^{n+1}_{\bullet}\).
\end{lemma}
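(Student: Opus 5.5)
The plan is to reduce \eqref{eq:S^1timesS^n-bott-bundle} to the clutching description of the Bott class in \cref{rem:bott_classes}, applied on the closed spin manifold \(\Torus \times \Sphere^n\) of dimension \(m = n+1\).

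First, observe that \(\tilde{B}^{n+1}\) is a clutching bundle along the hypersurface \(\{0\} \times \Sphere^n\), with clutching map \(\gamma(x) = e_{n+1} x \cdot\). The bundle of \cref{rem:bott_classes} is instead clutched along a small sphere \(\partial D \cong \Sphere^{n}\) around a point, via \(x \mapsto e_{n+1} x\cdot\) with \(x \in \Sphere^{n} \subset \R^{n+1}\). So one must compare two clutching constructions with the same map but different gluing loci.

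The comparison goes as follows. Split \(\Sphere^n\) into a small cap \(C\) around the north pole \(e_{n+1}\) and its complement. On the complement \(\Sphere^n \setminus C\), which is contractible, the map \(\gamma\) is null-homotopic through grading-preserving \(\CCl_{0,n+1}\)-automorphisms. Concretely, \(e_{n+1}x\) for \(x \neq -e_{n+1}\) can be deformed to \(e_{n+1}e_{n+1} = \pm 1\) along great circles, after recentring the cap at \(-e_{n+1}\) if necessary. Choosing the cap where \(\gamma\) is non-trivial, a homotopy of clutching functions, which does not change the isomorphism class of the bundle, makes \(\gamma\) constant outside \(\interior{C}\). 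Then \(\tilde{B}^{n+1}\) is trivial outside a neighbourhood of \(\{0\}\times C\), which is a ball \(D \subset \Torus \times \Sphere^n\) of dimension \(n+1\). Unfolding, the clutching along \(\{0\}\times C\) together with the identity on the rest of the slice is the same as clutching along \(\partial D\) with a map \(\partial D \cong \Sphere^n \to \Aut^{(0)}\) given by \(x \mapsto e_{n+1}x\cdot\), after a suitable reparametrization of \(\partial D\). This identifies \(\tilde{B}^{n+1}\) with the bundle of \cref{rem:bott_classes} up to a degree \(\pm1\) reparametrization. Fixing orientations correctly gives \([\tilde{B}^{n+1}] = \beta_{\Torus\times\Sphere^n}\).

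An alternative, cleaner route is to use \eqref{eq:bott-product}. View \(\tilde{B}^{n+1}\) as the exterior product of the \(\CCl_{0,1}\)-clutching bundle on \(\Torus\), which represents \(\beta_{\Torus} \in \K^1(\Torus)\) and is the \(n=0\) instance, with the Bott bundle on \(\Sphere^n\) glued via \(x \mapsto e_{n+1}x\). This requires identifying \(\CCl_{0,1}\hat\otimes\CCl_{0,n} \cong \CCl_{0,n+1}\). I expect this sign and grading bookkeeping, in either route, to be the main obstacle. I would try first the direct homotopy argument above and only verify the orientation at the end, since a wrong sign still gives a generator.

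Given \eqref{eq:S^1timesS^n-bott-bundle}, the remaining claims follow directly. By \([\Dirac_M]\cap\beta_M = 1\), pushed to a point, we get \(\innp{[\Dirac_{\Torus\times\Sphere^n}]}{[\tilde{B}^{n+1}]} = 1\). Since the family index class is \((\proj_1)_![\tilde{B}^{n+1}]\), it depends only on the K-theory class and not on the connection family \(\nabla_\bullet\). Hence \cref{lem:total_nonvanish_implies_family-nonvanish} shows that it is non-zero for every choice of \(\nabla_\bullet\).
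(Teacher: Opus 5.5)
Your route and the paper's reach the same input, and that input is the gap. You homotope $\gamma$ to be constant outside a small cap, which must be centred at $-e_{n+1}$ where $\gamma=-1$, since $e_{n+1}^2=+1$. You then reread $\tilde B^{n+1}$ as a clutching along the boundary of a small $(n+1)$-ball in $\Torus\times\Sphere^n$. The paper instead identifies $\tilde B^{n+1}\cong q^\ast B^{n+1}$ for a degree-one reduced-suspension map $q\colon\Torus\times\Sphere^n\to\Sphere^{n+1}$, where $B^{n+1}$ is the bundle of \cref{rem:bott_classes} on $\Sphere^{n+1}$, and then applies \eqref{eq:bott-degree-formula}. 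That packages your homotopy and reparametrization into the single fact $\deg q=1$. Both arguments therefore rest on the claim that the clutched bundle of \cref{rem:bott_classes} represents $\beta_M$. That claim is false, so your step ``fixing orientations correctly gives $[\tilde B^{n+1}]=\beta_{\Torus\times\Sphere^n}$'' cannot be carried out, and the paper's own proof fails at the same point.

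For $m=n+1$ odd, let $z$ be the multiple of $e_1\cdots e_{n+1}$ with $z^\ast=z$ and $z^2=1$. It is odd and central in $\CCl_{0,n+1}$. So right multiplication $R_z$ is an odd self-adjoint unitary on any bundle $E$ of graded right $\CCl_{0,n+1}$-modules that commutes with the right action. Over a compact base, $(\Gamma(E),0)$ is then operator homotopic to the degenerate cycle $(\Gamma(E),R_z)$, so $[E]=0$.
\begin{itemize}
\item \emph{Even $n$.} The map $\gamma$ lands in $(\CCl^{(0)}_{0,n+1})^\times\cong\mathrm{GL}_{2^{n/2}}(\C)$, whose $\pi_n$ vanishes. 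So $\tilde B^{n+1}$ is trivial and $[\tilde B^{n+1}]=0\neq\beta_{\Torus\times\Sphere^n}$. Moreover $R_z$ commutes with the connections $d+s\gamma^{-1}d\gamma$, which act by left multiplication. Hence $\mathrm{id}\mathbin{\hat\otimes}R_z$ anticommutes with every operator in the family, and the family index vanishes for every connection family. The statement is false for even $n$.
\item \emph{Odd $n$.} Write $\CCl_{0,n+1}\cong(\Sigma^0\oplus\Sigma^1)\mathbin{\hat\otimes}S$, with $\Sigma$ the irreducible graded left module and $S$ the irreducible right module. Then $\gamma(x)$ acts block-diagonally, and conjugation by $e_{n+1}$ turns the $\Sigma^1$-block into the inverse of the $\Sigma^0$-block. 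The two mapping tori therefore have opposite reduced classes, giving $[\tilde B^{n+1}]=\pm2\,\beta_{\Torus\times\Sphere^n}$. Non-vanishing survives, but the pairing is $\pm2$, not $1$.
\end{itemize}

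Your alternative route has the same defect in its simplest form. Every graded $\CCl_{0,1}$-module bundle represents $0\in\K^1(\Torus)$ (use $R_{e_1}$), so none represents $\beta_\Torus$. Also, $\tilde B^{n+1}$ is a mapping-torus bundle, not an exterior product. For even $n$ a different index input is needed, for instance a twisting bundle graded by the volume form of $\Sphere^n$, as in the Euler-characteristic argument.
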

\begin{proof}
    Using \(\gamma\) as a clutching map also yields a bundle \(\BottBdl_{n+1} \to \Sphere^{n+1} = \Disk^{n+1} \cup_{\Sphere^n} \Disk^{n+1}\) of graded \(\CCl_{0,n+1}\)-modules on \(\Sphere^{n+1}\).
    This bundle represents the Bott class \(\beta_{\Sphere^{n+1}} = [\BottBdl_{n+1}] \in \K^{n+1}(\Sphere^{n+1}) = \KK(\C, \Ct(\Sphere^{n+1}) \otimes \CCl_{0,n+1})\) by \cref{rem:bott_classes}, and hence we will refer to this bundle as the Bott bundle.

    Now let us compare this to the construction of \(\tilde{B}^{n+1} \to \Torus \times \Sphere^n\).
    To this end, let \(\tilde{q} \colon [0,1] \times \Sphere^n \to \Sphere^{n+1}\) be a smooth map implementing the reduced suspension.
    That is, \(\tilde{q}(\{0,1\} \times \Sphere^n \cup [0,1] \times \{e_{n+1}\}) = \{e_{n+2}\}\) and \(\tilde{q}\) descends to a homeomorphism \(\Torus \wedge \Sphere^{n} \xrightarrow{\cong} \Sphere^{n+1}\).
    In particular, the map \(\tilde{q}\) descends to a map \(q \colon \Torus \times \Sphere^n \to \Sphere^{n+1}\) of degree \(1\).
    We further assume that \(\tilde{q}(1/2, x) = (0, x)\) for all \(x \in \Sphere^n\) and
    \begin{align*}
        \tilde{q}([0,1/2] \times \Sphere^n) &= \{(r,u) \in \Sphere^{n+1} \subset \R \times \R^{n+1} \mid r \geq 0\} \eqcolon \Disk_{\geq} \\
        \tilde{q}([1/2,1] \times \Sphere^n) &= \{(r,u) \in \Sphere^{n+1} \subset \R \times \R^{n+1} \mid r \leq 0\} \eqcolon \Disk_{\leq}
    \end{align*}
    Now let \(B^{n+1} \to \Sphere^{n+1} = \Disk_{\geq} \cup_{\{0\} \times \Sphere^n} \Disk_{\leq}\) be the Bott bundle as above.
    Explicitly, the Bott bundle is obtained by clutching two copies of the trivial \(\CCl_{0,n+1}\)-bundle over $\Disk^{n+1}$ along the equator \(\Sphere^n = \{0\} \times \Sphere^{n} \subset \Sphere^{n+1}\) using \(\gamma\).
    Then the pullback bundle \(\tilde{q}^\ast B^{n+1}\) can be understood as gluing the trivial \(\CCl_{0,n+1}\)-bundle on \(( [0,1/2] \sqcup [1/2,1] ) \times \Sphere^n\) at \(\{1/2\} \times \Sphere^n\) using \(\gamma\).
    In these coordinates the bundle is trivial near \(\{0,1\} \times \Sphere^n\) and thus this descends to a description of \(q^\ast B^{n+1} \to \Torus \times \Sphere^n\).
    Passing instead to a single trivialization on \([0,1] \times \Sphere^n\) and identifying \(0 \sim 1\), this yields an identification \(q^\ast B^{n+1} \cong \tilde{B}^{n+1}\). 
    Since \(q\) has degree one, we use \labelcref{eq:bott-degree-formula} to see that \([\tilde{B}^{n+1}] = q^\ast [B^{n+1}] = q^\ast \beta_{\Sphere^{n+1}} = \deg(q) \beta_{\Torus \times \Sphere^n}\).
    In particular, \(\innp{[\Dirac_{\Torus \times \Sphere^n}]}{[\tilde{B}^{n+1}]} = \innp{[\Dirac_{\Torus \times \Sphere^n}]}{\beta_{\Torus \times \Sphere^n}} = 1\).
    Non-vanishing of the family index class is now a consequence of \cref{lem:total_nonvanish_implies_family-nonvanish}.
\end{proof}

\section{A proof of Llarull's theorem in all dimensions} \label{sec:llarull-classical}

For the geometric application, we also need the curvature of the connection family induced by the gauge transform \(\gamma\).
This is the same computation as in \textcite{SpectralFlowLlarull}, which we reproduce here in our setting.

\begin{lemma} \label{lem:gauge_connection}
Let \(\tilde{B}^{n+1}_{\bullet} \to \Sphere^{n}\) from \cref{defi:mapping_torus_bundle} be endowed with the family of connections which descends from the following formula
\begin{align}\label{eq:all-fam_of_connections}
    \tilde{\nabla} \colon [0,1] &\to \mathcal{A}(\Sphere^n \times \CCl_{0,n+1} \to \Sphere^n) \\
    s &\mapsto \tilde{\nabla}_s \coloneq d + s\gamma^{-1}d\gamma. \nonumber
\end{align}
Then the curvature of \(\tilde{\nabla}_s\) is given by
\begin{equation}\label{eq:hc-sphere-curvature}
  \tilde{\FullCurv}_s = -s(1-s)\omega^2 \in \Dforms^2(\Sphere^{n};\Cl_{0,{n+1}}^{(0)})
\end{equation}
where \(\omega \coloneq \gamma^{-1}\D \gamma\in \Dforms^1(\Sphere^{n};\Cl_{0,{n+1}}^{(0)})\) is the Maurer--Cartan form. In particular, if \(v,w \in \T \Sphere^n\) are orthonormal, then, as an endomorphism of \(\CCl_{0,n+1}\), we have that
\[
    \tilde{\FullCurv}_s(v,w)=2s(1-s)\clms(v)\clms(w),
\]
where \(\clms(\blank)\) denotes the left multiplication action of \(\CCl_{0,n+1}\) on itself.
\end{lemma}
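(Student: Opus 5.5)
The proof has three steps: check that the family descends to \(\Torus\), compute the curvature abstractly, and then evaluate \(\omega\) on tangent vectors.

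\textbf{Descent to \(\Torus\).} A connection \(d+A\) on the trivial bundle, gauge-transformed by \(\gamma\), becomes \(d+\gamma^{-1}A\gamma+\gamma^{-1}d\gamma\). Take \(A=0\), which is \(\tilde\nabla_0\). Its transform is \(d+\omega\), which is \(\tilde\nabla_1\). So the endpoints match under the clutching map \(\gamma\), and the family descends to \(\tilde{B}^{n+1}_\bullet\).

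\textbf{Abstract curvature computation.} The curvature of \(d+s\omega\) is
\[
    s\,d\omega+s^2\,\omega\wedge\omega.
\]
The Maurer--Cartan equation gives
\[
    d\omega = d(\gamma^{-1})\wedge d\gamma = -\gamma^{-1}d\gamma\,\gamma^{-1}\wedge d\gamma = -\omega\wedge\omega.
\]
Hence \(\tilde{\FullCurv}_s=(s^2-s)\,\omega^2=-s(1-s)\,\omega^2\), which is \eqref{eq:hc-sphere-curvature}. This step uses only that \(\gamma\) is a map into invertible elements acting by left multiplication.

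\textbf{Evaluating \(\omega\) and \(\omega^2\).} Since \(\gamma(x)\) is left multiplication by \(e_{n+1}x\), we have \(\gamma(x)^{-1}=\clms\bigl((e_{n+1}x)^{-1}\bigr)\). In \(\CCl_{0,n+1}\) unit vectors square to \(+1\), so \(x^{-1}=x\) and \(e_{n+1}^{-1}=e_{n+1}\). Therefore \((e_{n+1}x)^{-1}=x e_{n+1}\), and for \(v\in \T_x\Sphere^n\)
\[
    \omega_x(v)=\clms(x e_{n+1}e_{n+1}v)=\clms(xv).
\]
Because \(v\perp x\), we get \(vx=-xv\). For orthonormal \(v,w\in\T_x\Sphere^n\), using \(\omega^2(v,w)=\omega(v)\omega(w)-\omega(w)\omega(v)\):
\[
    \omega(v)\omega(w)=\clms(xvxw)=-\clms(x^2vw)=-\clms(vw).
\]
Similarly \(\omega(w)\omega(v)=-\clms(wv)=\clms(vw)\). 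Hence \(\omega^2(v,w)=-2\clms(v)\clms(w)\), and therefore
\[
    \tilde{\FullCurv}_s(v,w)=2s(1-s)\,\clms(v)\clms(w).
\]

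The main obstacle is getting the signs right. The points to watch are the Clifford convention in \(\CCl_{0,n+1}\), namely \(w^2=+\abs{w}^2\), and the wedge convention for matrix-valued \(1\)-forms, \((\alpha\wedge\beta)(v,w)=\alpha(v)\beta(w)-\alpha(w)\beta(v)\). I would fix both explicitly before carrying out the computation. One should also note that \(\clms\) is an algebra homomorphism, \(\clms(a)\clms(b)=\clms(ab)\), so all products can be computed inside the algebra.
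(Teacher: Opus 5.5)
Your proposal is correct and follows the paper's proof step for step: both use the Maurer--Cartan identity \(d\omega = -\omega\wedge\omega\) to reduce the curvature \(s\,d\omega + s^2\,\omega\wedge\omega\) to \(-s(1-s)\,\omega^2\), and then evaluate \(\omega_x(v) = xv\) and \(\omega^2(v,w) = -2vw\) via the relations \(x^2 = 1\) and \(vx = -xv\) in \(\mathbb{C}\mathrm{l}_{0,n+1}\). The one addition is your explicit gauge-transformation check that \(\tilde\nabla_0\) and \(\tilde\nabla_1\) match under \(\gamma\). The paper leaves this as ``by construction'', and your check is valid for the gluing convention \((1,x,a)\sim(0,x,\gamma(x)a)\).
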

\begin{proof}
Identifying \(\Aut^{(0)}(\Cl_{0,n+1}) = (\Cl_{0,n+1}^{(0)})^\times\), we can view \(\gamma\) as a map \(\gamma \colon \Sphere^n \to (\Cl_{0,n+1}^{(0)})^\times\), \(x \mapsto e_{n+1} x \) and \(\gamma(x)^{-1}=x e_{n+1}\), so the Maurer--Cartan form
\[
  \omega\coloneq \gamma^{-1}\D \gamma\in \Dforms^1(\Sphere^{n};\Cl_{0,n+1}^{(0)})
\]
satisfies
\begin{equation}\label{eq:hc-omega-sphere}
  \omega_x(v)=xv,
  \qquad v\in \T_x\Sphere^{n}.
\end{equation}
We compute
\begin{align*}
  \D\omega
  &=\D(\gamma^{-1}\D \gamma) \\
  &=(\D \gamma^{-1})\wedge \D \gamma+\gamma^{-1}\D^2 \gamma \\
  &=-\gamma^{-1}(\D \gamma)\gamma^{-1}\wedge \D \gamma \\
  &=-(\gamma^{-1}\D \gamma)\wedge(\gamma^{-1}\D \gamma) \\
  &=-\omega\wedge\omega,
\end{align*}
therefore
\begin{equation*}%
  \tilde{\FullCurv}_s = s\D\omega+s^2\omega\wedge\omega
  =-s(1-s)\omega^2.
\end{equation*}
If \(v,w\in \T_x\Sphere^{n}\) are orthonormal, then
\[
  \omega_x^2(v,w)=-2vw.
\]
Therefore, as an endomorphism of \(\CCl_{0,n+1}\),
\[
  \tilde{\FullCurv}_s(v,w)=2s(1-s)\clms(v)\clms(w). \qedhere
\]
\end{proof}

\begin{theorem}\label{thm:all-Llarull_sphere}
    Let $(M,g)$ be a closed connected Riemannian spin manifold of dimension $n \geq 2$ with $\scal_{g} \geq n(n-1)$.
    Then any smooth area non-increasing map $f \colon (M,g) \to (\Sphere^n, g_{\mathrm{\circ}})$ of non-zero degree is an isometry for \(n \geq 3\) and an area-density-preserving diffeomorphism for \(n = 2\).
\end{theorem}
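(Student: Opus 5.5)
We will work on \(Y = \Torus \times M\) with the submersion \(p = \proj_1 \colon Y \to \Torus\). The twisting bundle is \(E \coloneq (\id_{\Torus} \times f)^\ast \tilde{B}^{n+1}\), a bundle of graded \(\CCl_{0,n+1}\)-modules. We equip it with the pulled-back connection family \(f^\ast \tilde{\nabla}_s\) from \cref{lem:gauge_connection}.

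\emph{Index step.} The map \(\id_{\Torus} \times f\) has degree \(\deg(f)\). By \labelcref{eq:bott-degree-formula} and \cref{lem:family_index_main_computation},
\[
  [E] = \deg(f)\, \beta_{\Torus \times M},
\]
so \(\innp{[\Dirac_{\Torus \times M}]}{[E]} = \deg(f) \neq 0\). \Cref{lem:total_nonvanish_implies_family-nonvanish} then shows that the family index of \(\Dirac_{M, E_\bullet}\) in \(\K^1(\Torus)\) is non-zero. Contrapositively, \cref{lem:invertible_everywhere_vanishes} gives some \(s_0 \in [0,1]\) at which \(\Dirac_{M,E_{s_0}}\) has non-trivial kernel. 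At \(s = 0,1\) the connection is gauge-equivalent to the trivial one, so the twisted operator there is the untwisted Dirac operator. Because \(\scal_g \geq n(n-1) > 0\), Lichnerowicz shows that operator is invertible, hence \(s_0 \in (0,1)\). This is the point where the Clifford-linear family index replaces the even/odd case distinction.

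\emph{Curvature estimate.} For \(u\) in the kernel, the Schrödinger--Lichnerowicz formula gives
\[
  0 = \|\nabla u\|^2 + \int_M \Bigl( \tfrac{\scal_g}{4}\abs{u}^2 + \langle \mathcal{R}^{E} u, u\rangle \Bigr),
\]
where
\[
  \mathcal{R}^E = \tfrac12 \sum_{i<j} c(e_i)c(e_j) \otimes f^\ast \tilde{\FullCurv}_{s_0}(e_i,e_j).
\]
We diagonalize \(\D f\) by singular value decomposition: choose orthonormal frames \(e_i\) of \(\T M\) and \(\epsilon_i\) of \(\T \Sphere^n\) with \(\D f(e_i) = \mu_i \epsilon_i\), \(\mu_i \geq 0\). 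The area non-increasing condition reads \(\mu_i\mu_j \leq 1\) for \(i \neq j\). By \cref{lem:gauge_connection},
\[
  f^\ast\tilde{\FullCurv}_{s_0}(e_i,e_j) = 2s_0(1-s_0)\mu_i\mu_j\, \clms(\epsilon_i)\clms(\epsilon_j).
\]
The operators \(c(e_i)c(e_j) \otimes \clms(\epsilon_i)\clms(\epsilon_j)\) are self-adjoint involutions, so each has norm at most \(1\). This yields the pointwise bound
\[
  \mathcal{R}^E \geq -2s_0(1-s_0)\, \tfrac{n(n-1)}{2} \geq -\tfrac{n(n-1)}{4},
\]
using \(s_0(1-s_0) \leq \tfrac14\). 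Combined with \(\scal_g \geq n(n-1)\), the integrand is non-negative.

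\emph{Rigidity.} Every inequality above must therefore be an equality. First, \(\nabla u = 0\), so \(u\) is parallel and hence nowhere vanishing. Second, \(s_0 = \tfrac12\). Third, \(\scal_g = n(n-1)\) everywhere. Fourth, \(\mu_i\mu_j = 1\) for all \(i \neq j\), and each product \(c(e_i)c(e_j)\otimes \clms(\epsilon_i)\clms(\epsilon_j)\) acts on \(u\) with eigenvalue \(-1\). For \(n \geq 3\), the relations \(\mu_i\mu_j = 1\) for all pairs \(i\neq j\) force every \(\mu_i = 1\). Thus \(\D f\) is a linear isometry at every point, so \(f\) is a local isometry and hence a Riemannian covering. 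Since \(\Sphere^n\) is simply connected for \(n \geq 2\) and \(M\) is connected, \(f\) is an isometry. For \(n = 2\) we only get \(\mu_1\mu_2 = 1\), i.e.\ \(f\) is area-density preserving and a local diffeomorphism; the covering argument over \(\Sphere^2\) then makes it a diffeomorphism.

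\emph{Main obstacle.} The most delicate step is the equality analysis. We must make sure a nonzero kernel element really forces \(\D f\) to have full rank everywhere, not merely almost everywhere. Parallelity of \(u\) handles this: it makes the pointwise equality hold at every point, so each \(\mu_i\mu_j = 1\) holds everywhere. A secondary point is justifying that the endpoint operators coincide, up to gauge, with the untwisted Dirac operator. This holds because \(\gamma^{-1}\tilde{\nabla}_1\gamma = d\).
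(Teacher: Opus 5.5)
Your proof is correct and follows the paper's argument essentially step by step. It uses the pulled-back family \(f^\ast\tilde{B}^{n+1}_\bullet\), then \cref{lem:family_index_main_computation,lem:total_nonvanish_implies_family-nonvanish,lem:invertible_everywhere_vanishes} to get a kernel at some \(s_0\), then the same Schr\"odinger--Lichnerowicz estimate, and finally parallelity of \(u\) to force pointwise equality everywhere. Your extra Lichnerowicz step excluding \(s_0\in\{0,1\}\) is harmless but redundant, since equality already forces \(s_0=\tfrac12\).

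There are two cosmetic slips. First, the twisting term is \(\sum_{i<j}\clm(e_i)\clm(e_j)\otimes f^\ast\tilde{\FullCurv}_{s_0}(e_i,e_j)\) without the factor \(\tfrac12\); your final bound \(-s_0(1-s_0)n(n-1)\) is the one for this normalization. Second, since \(\tilde{\nabla}_1=\gamma^{-1}\circ d\circ\gamma\), the gauge relation reads \(\gamma\tilde{\nabla}_1\gamma^{-1}=d\), not \(\gamma^{-1}\tilde{\nabla}_1\gamma=d\).
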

\begin{proof}

Consider the \(\Torus\)-family of bundles \(\tilde{B}^{n+1}_{\bullet} \to \Sphere^{n}\) from \cref{defi:mapping_torus_bundle}.
By construction, the family of connections
\begin{align*}
    \tilde{\nabla} \colon [0,1] &\to \mathcal{A}(\Sphere^n \times \CCl_{0,n+1} \to \Sphere^n) \\
    s &\mapsto \tilde{\nabla}_s \coloneq d + s\gamma^{-1}d\gamma \nonumber
\end{align*}
descends to a \(\Torus\)-parametrized family of connections on $\tilde{B}^{n+1}_{\bullet}$.
Let \(E_\bullet = f^\ast \tilde{B}^{n+1}_\bullet\) be the pullback of this family equipped with the pullback family of connections.
Using \labelcref{eq:S^1timesS^n-bott-bundle} from \cref{lem:family_index_main_computation} together with \labelcref{eq:bott-degree-formula}, we obtain
\[
    \begin{aligned}
        \innp{[\Dirac_{\Torus \times M} ]}{(\id \times f)^\ast [\tilde{B}^{n+1}]} & = \innp{[\Dirac_{\Torus \times M} ]}{(\id \times f)^\ast \beta_{\Torus \times \Sphere^n}} \\
        &= \deg(f) \innp{[\Dirac_{\Torus \times M} ]}{\beta_{\Torus \times M}} \\
        &= \deg(f).
    \end{aligned}
\]
Thus \cref{lem:total_nonvanish_implies_family-nonvanish} implies
\( \ind_{\Torus}(\Dirac_{M, E_{\bullet}}) \neq 0 \)
and so by \cref{lem:invertible_everywhere_vanishes} there exists an \(s \in [0,1]\) such that \(\Dirac_{s}\) has a non-trivial kernel, where \(\Dirac_s\) is the Dirac operator acting on sections of \(\SpinBdl_{M} \otimes \CCl_{0,n+1}\) defined with respect to the connection \(\nabla_{M,s} \coloneq \nabla_{M} \otimes f^\ast \tilde{\nabla}_{s}\) and \(\nabla_{M}\) is the spinor connection on \(\SpinBdl_M\).

From here onwards the proof proceeds as in \cite{SpectralFlowLlarull,baer2025spectralflowatiyahpatodisingerindex}.
Indeed, we have the Schrödinger--Lichnerowicz formula for the twisted Dirac operator \(\Dirac_s\),
\begin{align}\label{eq:all-SL-formula}
    (\Dirac_s)^2 = \nabla_{M,s}^* \nabla_{M,s} + \frac{\scal_g}{4} + \Curv_s,
\end{align}
where \(\Curv_s = \clm\big(f^*\tilde{\FullCurv}_s\big)\).
Choosing orthonormal bases \(w_1, \dotsc, w_n\) of \(\T_{p} M\) and \(v_1, \dotsc, v_n\) of \(\T_{f(p)} \Sphere^n \subset \R^{n+1}\) such that \(\D f(w_i) = \lambda_i v_i\) with \(\lambda_i \geq 0\), we can write
\begin{align*}
\Curv_s &= \sum_{i < j} \clm(w_i) \clm(w_j) \otimes f^\ast \tilde{\FullCurv}_s(w_i, w_j) \\
 &= 2 s (1-s) \sum_{i < j} \clm(w_i) \clm(w_j) \otimes \clms(\D f(w_i)) \clms(\D f(w_j)) \\
 &= 2 s (1-s) \sum_{i < j} \lambda_i \lambda_j \clm(w_i) \clm(w_j) \otimes \clms(v_i) \clms(v_j),
\end{align*} 
where we have used \cref{lem:gauge_connection}.
Thus, using that \(\lambda_i \lambda_j \leq 1\) for all \(i \neq j\) due to the area non-increasing assumption,
\begin{equation}
    \innp{\Curv_s u}{u}_p \geq - 2 s(1-s) \sum_{i<j} \lambda_i \lambda_j \abs{u}_p^2 \geq -s(1-s) n(n-1) \abs{u}_p^2 \geq - \frac{n(n-1)}{4} \abs{u}_p^2, \label{eq:curvature_estimate_llarull}
\end{equation}
where if equality holds and \(\abs{u}_p \neq 0\), then \(s = \frac{1}{2}\) and \(\lambda_i \lambda_j = 1\) for all \(i < j\).
If \(n \geq 3\), the latter, in turn, is equivalent to \(\lambda_i = 1\) for all \(i\), that is, \(\D_p f\) is an isometry.

Now take a non-zero section \(u\) of \(\SpinBdl_{M} \otimes \CCl_{0,n+1}\) such that \(\Dirac_s u = 0\).
Then using the hypothesis \(\scal_g \geq n(n-1)\) together with \labelcref{eq:curvature_estimate_llarull} yields
\begin{align*}
0 = \int_{M} \abs{\Dirac_s u}^2 \dV &= \int_{M} \abs{\nabla_{M,s} u}^2 \dV + \int_{M} \frac{\scal_g}{4} \abs{u}^2 + \innp{\Curv_s u}{u} \dV \\
&\geq  \int_{M} \abs{\nabla_{M,s} u}^2 \dV \geq 0.
\end{align*}
Thus \(\nabla_{M,s} u = 0\), so \(u\) vanishes nowhere, and we are in the equality case of \labelcref{eq:curvature_estimate_llarull} at every point \(p \in M\).
By the discussion in the previous paragraph this means that \(s = \frac{1}{2}\) and \(f\) is a local diffeomorphism.
Since \(\Sphere^n\) is simply-connected, \(f\) must be a diffeomorphism.
Moreover, the discussion also shows that for \(n \geq 3\), \(f\) is a local isometry, and for \(n = 2\), it at least preserves the area density.
\end{proof}

\section{Products of convex hypersurfaces} \label{sec:products}

The proof of Llarull's theorem from the previous section generalizes to the case where the comparison manifold is a product of hypersurfaces (in particular, to a product of spheres).
We start with a lemma that contains the linear algebraic core of the rigidity conclusion.

\begin{lemma}
\label{lem:linear-algebra-factorwise-area-rigidity}
    Let \(W=\bigoplus_{j=1}^k W_j\) be an orthogonal direct sum of finite-dimensional Euclidean vector spaces with \(n_j \coloneq \dim W_j\geq2\), \(k \geq 1\), and set \(n\coloneqq \dim W\).
    Let \(V\) and \(Z\) be finite-dimensional Euclidean vector spaces and let \(F = (L,K) \colon V \to W \oplus Z\) be a linear map.
    For each \(j \in \{1, \dotsc k\}\) choose an orthonormal basis \(\epsilon_{j,1},\dots,\epsilon_{j,n_j}\) of \(W_j\).
    
    Assume \( \abs{\Lambda^2 F}_{\op} \leq 1 \) and that for every \(j\) and every \(1 \leq \alpha < \beta \leq n_j\), we have \(\abs{\Lambda^2 L^\top(\epsilon_{j,\alpha}\wedge \epsilon_{j,\beta})}=1\), where \(L^\top \colon W \to V\) denotes the adjoint of \(L\).
    Then \(L\) is surjective and \(K|_{(\ker L)^\perp} = 0\).
    Moreover, if \(n \geq 3\), then \(L|_{(\ker L)^\perp}\colon (\ker L)^\perp\to W\) is an isometry, and if \(n = 2\), then \(\Lambda^2 L|_{(\ker L)^\perp}\colon \Lambda^2 (\ker L)^\perp\to \Lambda^2 W\) is an isometry.
\end{lemma}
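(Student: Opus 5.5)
The plan is to dualize and work with the adjoint \(F^\top \colon W \oplus Z \to V\). Since \(\abs{\Lambda^2 F^\top}_{\op} = \abs{\Lambda^2 F}_{\op} \leq 1\) and \(F^\top(w,0) = L^\top w\), we have \(\Lambda^2 F^\top(\epsilon_{j,\alpha}\wedge\epsilon_{j,\beta}) = x_{j,\alpha}\wedge x_{j,\beta}\), where \(x_{j,\alpha} \coloneqq L^\top \epsilon_{j,\alpha}\). The hypothesis says that each unit vector \(\xi = \epsilon_{j,\alpha}\wedge\epsilon_{j,\beta}\) is mapped by an operator of norm \(\leq 1\) to a vector of norm \(1\). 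This forces \(\xi\) to lie in the top singular space of \(\Lambda^2F^\top\), so that \(\Lambda^2 F\,\Lambda^2 F^\top \xi = \xi\). Explicitly,
\[
F x_{j,\alpha} \wedge F x_{j,\beta} = \epsilon_{j,\alpha}\wedge \epsilon_{j,\beta} \in \Lambda^2(W\oplus Z).
\]
Since the right-hand side is a non-zero decomposable \(2\)-vector, it follows that \(\operatorname{span}\{F x_{j,\alpha}, F x_{j,\beta}\} = \operatorname{span}\{\epsilon_{j,\alpha},\epsilon_{j,\beta}\} \subseteq W_j\).

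Using \(n_j \geq 2\), every index \(\alpha\) has a partner \(\beta \neq \alpha\), and we draw two conclusions. First, \(K x_{j,\alpha} = 0\) and \(L x_{j,\alpha} \in W_j\) for all \(j,\alpha\). Second, \(\epsilon_{j,\alpha}\) lies in the image of \(L\), so \(L\) is surjective. Because \((\ker L)^\perp = \operatorname{im} L^\top = \operatorname{span}\{x_{j,\alpha}\}\), we obtain \(K|_{(\ker L)^\perp} = 0\).

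For the isometry statements, consider the positive semidefinite operator \(A \coloneqq L L^\top\) on \(W\). The restriction \(L|_{(\ker L)^\perp}\) is an isometry exactly when \(A = \id_W\), and the analogous statement holds for \(\Lambda^2\). Since \(A\epsilon_{j,\alpha} = L x_{j,\alpha} \in W_j\), the operator \(A\) is block diagonal, with blocks \(A_j\) on \(W_j\). The displayed identity, projected to \(\Lambda^2 W\), says that \(\Lambda^2 A_j = \id\) on \(\Lambda^2 W_j\). Diagonalizing \(A_j\) with eigenvalues \(\mu_1,\dots,\mu_{n_j} \geq 0\), we get \(\mu_a\mu_b = 1\) for all \(a \neq b\). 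This gives \(A_j = \id\) if \(n_j \geq 3\), and eigenvalues \(\mu, \mu^{-1}\) if \(n_j = 2\).

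Next we bound cross-factor products. Since \(\Lambda^2 A = \Lambda^2 L\,\Lambda^2 L^\top\) and \(\abs{\Lambda^2 L}_{\op} \leq \abs{\Lambda^2 F}_{\op} \leq 1\), every product \(\mu\nu\) of eigenvalues taken from different blocks is at most \(1\). If \(n \geq 3\) and some block \(W_j\) has \(n_j = 2\) with eigenvalue \(\mu > 1\), then either \(k \geq 2\) or this is impossible. In the case \(k \geq 2\), another block has an eigenvalue \(\nu \geq 1\): either that block is the identity, or it has eigenvalues \(\nu, \nu^{-1}\). Then \(\mu\nu > 1\), a contradiction. Hence \(A = \id\). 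If \(n = 2\), then \(k = 1\), and \(\Lambda^2 A = \id\) on \(\Lambda^2 W\) gives the \(\Lambda^2\)-isometry statement directly.

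The step I expect to require the most care is the first: passing from \(\abs{\Lambda^2F^\top\xi} = 1\) with operator norm \(\leq 1\) to \(\Lambda^2 F\,\Lambda^2F^\top\xi = \xi\). The cleanest route is the inequality \(\abs{\xi}^2 = \abs{\Lambda^2F^\top\xi}^2 = \langle \Lambda^2F\Lambda^2F^\top \xi,\xi\rangle \leq \abs{\xi}^2\), combined with the equality case of Cauchy–Schwarz for the positive operator \(\Lambda^2F\Lambda^2F^\top \leq \id\). From there one reads off the decomposable \(2\)-vector identity and the span equality.
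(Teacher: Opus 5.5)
Your proof is correct. Its core step matches the paper's: the bound $|\Lambda^2 L^\top(\epsilon_{j,\alpha}\wedge\epsilon_{j,\beta})| = 1$, together with an operator-norm bound $\le 1$, forces the bivector to be fixed, and equality of nonzero decomposable bivectors then gives equality of spans. Where you differ is in which operator you apply this to, and this changes the structure of the argument.

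The paper only uses $P = LL^\top$ and $\Lambda^2 P \le \mathrm{id}$ on $\Lambda^2 W$. You use $\Lambda^2 F\,\Lambda^2 F^\top \le \mathrm{id}$ on $\Lambda^2(W\oplus Z)$. That gives you $Fx_{j,\alpha} \in W_j \oplus 0$ in one step, so $Kx_{j,\alpha} = 0$ and surjectivity of $L$ follow at once. The paper instead gets surjectivity from $\det P = 1$, computed blockwise. It proves $K|_{(\ker L)^\perp} = 0$ only at the very end, via $B = KL^\top$ and the expansion of $|(Pu,Bu)\wedge(Pv,Bv)|^2$. That last argument needs $\Lambda^2 P = \mathrm{id}$ on all of $\Lambda^2 W$, i.e.\ it depends on the isometry conclusion. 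In your route the $K$-vanishing is independent of the isometry part, which is a genuine simplification.

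For the isometry when $n \ge 3$, the paper argues globally with the ordered eigenvalues $\sigma_1 \ge \sigma_2 \ge \dots$ of $P$. Since $\sigma_1\sigma_2 \le 1$, if $\sigma_1 > 1$ then $1 = \det P \le \sigma_1\sigma_2^{n-1} \le \sigma_1^{2-n} < 1$, a contradiction. You argue locally instead:
\begin{itemize}
\item $A_j = \mathrm{id}$ when $n_j \ge 3$;
\item $A_j$ has eigenvalues $\mu, \mu^{-1}$ when $n_j = 2$;
\item cross-block eigenvalue products are $\le 1$, which rules out $\mu > 1$ because a second block supplies an eigenvalue $\ge 1$.
\end{itemize}
Both are elementary. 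The determinant argument avoids your case analysis, while yours shows exactly where $n \ge 3$ enters: either some block has dimension at least $3$, or there is a second block.
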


\begin{proof}
    Define \(P \coloneq L \circ L^\top \colon W \to W\).
    Then \( \abs{\Lambda^2 F}_{\op} \leq 1 \) implies \( \abs{\Lambda^2 L}_{\op} \leq 1 \) and so \(0 \leq \Lambda^2 P \leq \id_{\Lambda^2 W}\).
    Using this, we have the following inequality
    \[
        \begin{aligned}
            1 &= \abs{\Lambda^2 L^\top (\epsilon_{j,\alpha} \wedge \epsilon_{j,\beta})}^2 \\
            &= \innp{\Lambda^2 P(\epsilon_{j,\alpha} \wedge \epsilon_{j,\beta})}{\epsilon_{j,\alpha} \wedge \epsilon_{j,\beta}}
            \leq \abs{\epsilon_{j,\alpha} \wedge \epsilon_{j,\beta}}^2 = 1
        \end{aligned}
    \]
    which turns out to be an equality.
    Therefore for every \(j\) and every \(1 \leq \alpha < \beta \leq n_j\),
    \begin{equation}
        \label{eq:lambda-two-p-fixes-coordinate-bivectors}
        \Lambda^2 P(\epsilon_{j,\alpha} \wedge \epsilon_{j,\beta}) = \epsilon_{j,\alpha} \wedge \epsilon_{j,\beta}.
    \end{equation}
    In particular, every two-plane of the form \(\linspan \{\epsilon_{j,\alpha}, \epsilon_{j,\beta}\} \subset W_j\) for \(\alpha < \beta\) is preserved by \(P\).
    If \(n_j = 2\), this implies that \(W_j\) is \(P\)-invariant and \(\det(P|_{W_j}) = 1\).
    For \(n_j \geq 3\), we also conclude that \(W_j\) is \(P\)-invariant and for each collection of three pairwise distinct indices \(1 \leq \alpha,\beta,\gamma \leq n_j\) we have 
    \[
        P \epsilon_{j,\alpha} \in \linspan \{\epsilon_{j,\alpha}, \epsilon_{j,\beta}\} \cap \linspan \{\epsilon_{j,\alpha}, \epsilon_{j,\gamma}\} = \linspan \{\epsilon_{j,\alpha}\}.
    \]
    Thus if \(n_j \geq 3\), every \(\epsilon_{j,\alpha}\) for \(1 \leq \alpha \leq n_j\) is an eigenvector of \(P\).
    Together with \labelcref{eq:lambda-two-p-fixes-coordinate-bivectors} this means that \(P|_{W_j} = \id_{W_j}\) whenever \(n_j \geq 3\).
    In particular, we obtain in total \(\det(P) = \prod_{j=1}^k \det(P|_{W_j}) = 1\) and \(P = L \circ L^\top\) is invertible. 
    This implies that \(L\) is surjective.
    
    Next we prove that \(L|_{(\ker L)^\perp}\colon (\ker L)^\perp\to W\) is an isometry if \(n \geq 3\).
    To this end, let \(\sigma_1 \geq \dotsc \geq \sigma_n \geq 0\) be the eigenvalues of \(P\) and note that by \(\Lambda^2 P \leq \id_{\Lambda^2 W}\), we have \(\sigma_1 \sigma_2 \leq 1\).
    We claim that \(\sigma_1 \leq 1\).
    Indeed, if \(\sigma_1 > 1\), then \(\sigma_2 \leq \sigma_1^{-1} < 1\) and provided \(n \geq 3\), we get that
    \[
    1 = \det(P) = \sigma_1 \sigma_2 \dotsm \sigma_n \leq \sigma_1 \sigma_2^{n-1} \leq \sigma_1^{2-n} < 1,
    \]
    which is a contradiction.
    Hence, \(\sigma_1 \leq 1\) and together with \(\det(P) = 1\), we get that \(\sigma_1 = \sigma_2 = \dotsc = \sigma_n = 1\).
    Thus \(L \circ L^\top = P = \id_W\) which implies that \(L^\top\colon W\to V\) is an isometric embedding with \(\im(L^\top) = (\ker L)^\perp\), and the restriction \( L\colon(\ker L)^\perp\to W \) is an isometry.
    
    In the case \(n = 2\), we still have \(\Lambda^2 L \circ (\Lambda^2 L)^\top = \Lambda^2 (L \circ L^{\top}) = \Lambda^2 P = \id_{\Lambda^2 W}\), so \(\Lambda^2 L^{\top} \colon \Lambda^2 W \to \Lambda^2 V\) is an isometric embedding and \(\Lambda^2 L|_{(\ker L)^\perp} \colon \Lambda^2 (\ker L)^\perp \to \Lambda^2 W\) is an isometry.
    
    It remains to show that \( K \) vanishes on \( (\ker L)^\perp \).
    To this end, define \( B\coloneqq K\circ L^\top\colon W\to Z \).
    The map \( F \circ L^\top \colon W \to W \oplus Z\) is given by \( w\mapsto(P w, Bw) \).
    Using that \( \Lambda^2 L^\top \) is an isometric embedding and \( \abs{\Lambda^2 F}_{\op} \leq 1\), we see that the map \(F \circ L^\top\) is area non-increasing.
    Then let \( u,v\in W \) be an orthonormal pair of vectors and keeping \(\Lambda^2 P = \id_{\Lambda^2 W}\) in mind, we derive the equality
    \[ 
        \abs{(P u,Bu)\wedge(P v,Bv)}^2 = \underbrace{\abs{Pu\wedge Pv}^2}_{=\abs{u \wedge v} = 1} + \abs{Pu \wedge Bv - Pv \wedge Bu}^2 + \abs{Bu\wedge Bv}^2.
    \]
    The area non-increasing property of \(F \circ L^\top\) gives \( \abs{(P u,Bu)\wedge(P v,Bv)}^2\leq 1 \), so we must have \(Pu \wedge Bv = Pv \wedge Bu \in \Lambda^2 (W \oplus Z)\).
    Since \(P u \wedge P v = u \wedge v \neq 0\), the vectors \(Pu\) and \(Pv\) are linearly independent and thus \( Bu = Bv = 0\).
    This proves that \(K \circ L^\top = B=0\) because \(\dim (W) \geq 2\) and so \( K|_{(\ker L)^\perp}=0\).
\end{proof}

We now start with the proof of \cref{thm:general-product-splitting} stated in the introduction.

\begin{proof}[Proof of \cref{thm:general-product-splitting}]
    For the index-theoretic part of the proof, the circle factors will be treated in the same way as the other factors.
    To this end, define \(N_{k+i} \coloneq \Sphere^1_{r_i} \subset \R^2\) and \(n_{k+i} \coloneq 1\) for \(i \in \{1, \dotsc, l\}\).
    For each $j \in \{1, \dotsc, k+l\}$, let $\nu_j \colon N_j \to \Sphere^{n_j}$ be the Gauß map of $N_j$ corresponding to the outward pointing unit normal.
    Note that each \(\nu_j\) is a diffeomorphism by the strict convexity assumption.
    Consider the vector bundles $\tilde{E}_{j} \coloneq (\id \times \nu_j)^*\tilde{B}^{n_j+1} \to \Torus \times N_j$, where the bundle \(\tilde{B}^{n_j+1} \to \Torus \times \Sphere^{n_j}\) is defined in \cref{defi:mapping_torus_bundle}.
    By \cref{lem:family_index_main_computation} and \labelcref{eq:bott-degree-formula}, we have that $[\widetilde{E}_j] = \beta_{\Torus \times N_j} \in \K^{n_j+1}(\Torus \times N_j)$ for all $j\in \{1, \dotsc, k+l\}$.
    Let $\tilde{E} \coloneq (\bigboxtimes_{j=1}^{k+l}\widetilde{E}_j) \to \Torus^{k+l} \times N \times T$ denote the exterior tensor product of these bundles.
    We work with the graded tensor product of Clifford modules and thereby \(\tilde{E}\) becomes a bundle of graded \(\CCl_{0,(n+l)+(k+l)}\)-modules.
    Then by \labelcref{eq:bott-product}, we have \([\tilde{E}] = \beta_{\Torus^{k+l} \times N \times T} \in \K^{(n+l)+(k+l)}(\Torus^{k+l} \times N \times T)\).
    Now let \(E \coloneq (\id_{\Torus^{k+l}} \times f)^\ast \tilde{E} \to \Torus^{k+l} \times M\) and using \labelcref{eq:bott-degree-formula}, we see that
        \begin{align*}
        [E] = \deg(f)\ \beta_{\Torus^{k+l} \times M} \in \K^{(n+l)+(k+l)}(\Torus^{k+l} \times M).
    \end{align*}
    Since $f$ has non-zero degree, we finally arrive at
    \begin{equation}
        \innp{[\Dirac_{\Torus^{k+l} \times M}]}{[E]} = \deg(f) \innp{[\Dirac_{\Torus^{k+l} \times M}]}{\beta_{\Torus^{k+l} \times M}} = \deg(f) \neq 0.
    \end{equation}
    
    We now view \(E\) as a \(\Torus^{k+l}\)-family of \(\CCl_{0,(n+l)+(k+l)}\)-module bundles over \(M\) denoted by \(E_{\bullet} \to M\).
    Endow \(E_{\bullet}\) with the family of connections obtained by the pullback along \((\nu_1 \times \dotsm \times \nu_{k+l}) \circ f\) of the product of the connections \labelcref{eq:all-fam_of_connections}.
    Then the family index $\ind_{\Torus^{k+l}}(\Dirac_{M,E_{\bullet}}) \in {\K^{k+l}(\Torus^{k+l})}$ is non-zero by \cref{lem:total_nonvanish_implies_family-nonvanish}. 
    
    Set \(p \coloneq \proj_N \circ f\) and \(\mathcal{H} \coloneq \ker(\D p)^\perp\).
    Given \(x \in M\) and a vector \(\xi \in \T_{p(x)} N\), we write
    \[
        \xi^{\mathcal{H}} \coloneq (\D p)^\top(\xi) \in \mathcal{H}_{x},
    \]
    where \((\D p)^{\top} \colon p^\ast \T N \to \T M\) denotes the adjoint of \(\D p\) with respect to the Riemannian metrics.
    This pointwise definition makes sense even if we do not know (yet) that \(p\) is a submersion.
    
    Let \(\Shape_j = \D \nu_j \in \End(\T N_j)\) be the shape operator of \(N_j\), \(j \in \{1, \dotsc, k+l\}\).
    By the strict convexity assumption, we can choose at each point \(y_j \in N_j\) an orthonormal basis \(\epsilon_{j,1}, \dotsc, \epsilon_{j,n_j}\) of \(\T N_j\) such that \(\Shape_j \epsilon_{j,\alpha} = \kappa_{j, \alpha} \epsilon_{j,\alpha}\) with \(\kappa_{j,\alpha} > 0\).
    We note that at the point \((y_1, \dotsc, y_{k+l}) \in N \times T\), the Gauss equation implies that
    \begin{equation}
        \scal_{g_N \oplus g_T} = \scal_{g_N} \circ \proj_N = 2 \sum_{j = 1}^{k} \sum_{1 \leq \alpha < \beta \leq n_j} \kappa_{j, \alpha} \kappa_{j, \beta}. \label{eq:scalar_via_principal-curvatures}
    \end{equation}
    Then the curvature of the pullback of the connection family \labelcref{eq:all-fam_of_connections} along \(\nu_j\) satisfies
    \begin{equation}\label{eq:pullback_curvature_via_principal-curvatures}
        \begin{aligned}
            \FullCurv^{\nu_j^\ast \tilde{\nabla}_{s_j}}(\epsilon_{j,\alpha}, \epsilon_{j,\beta}) &= \tilde{\FullCurv}_{s_j}(\Shape_j(\epsilon_{j,\alpha}), \Shape_j(\epsilon_{j,\beta})) \\
            &= \kappa_{j, \alpha} \kappa_{j,\beta} \tilde{\FullCurv}_{s_j}(\epsilon_{j, \alpha}, \epsilon_{j,\beta}) \underset{\eqref{eq:hc-sphere-curvature}}{=} 2 s_j (1-s_j) \kappa_{j,\alpha} \kappa_{j,\beta} \clms(\epsilon_{j,\alpha}) \clms(\epsilon_{j,\beta})
        \end{aligned}
    \end{equation}
    for \(j \leq k\), \(\alpha < \beta\) and \(s_j \in [0,1]\), where \(\clms\) denotes left multiplication on \(\CCl_{0,n_j+1}\).
    For \(j > k\) we note that \(\FullCurv^{\nu_j^\ast \tilde{\nabla}_{\bullet}} = 0\) because these correspond to the one-dimensional factors.
    
    By \cref{lem:invertible_everywhere_vanishes} there exists a point \(\mathbf{s} = [(s_1, \dotsc, s_{k+l})] \in \Torus^{k+l} = [0,1]^{k+l} / \sim\) such that \(\Dirac_{\mathbf{s}} \coloneq \Dirac_{M, E_{\mathbf{s}}}\) has a non-trivial kernel.
    Choose a smooth section \(u\) of \(\SpinBdl_{M} \otimes \CCl_{0,(n+l)+(k+l)}\) such that \(\Dirac_{\mathbf{s}} u = 0\).
    Let \(\nabla_{\mathbf{s}}\) denote the corresponding connection on \(\SpinBdl_{M} \otimes \CCl_{0,(n+l)+(k+l)}\), that is,
    \[
        \nabla_{\mathbf{s}} \coloneq \nabla_M \otimes \nabla_{E_{\mathbf{s}}} \coloneq \nabla_M \otimes f^\ast \bigboxtimes_{j=1}^{k+l}\nu_{j}^\ast \tilde{\nabla}_{s_j}.
    \]
    The Schrödinger-Lichnerowicz formula for \(\Dirac_\mathbf{s}\) is of the form
    \begin{align}
        \Dirac_{\mathbf{s}}^2 = {\nabla}_{\mathbf{s}}^*{\nabla}_{\mathbf{s}} + \frac{\scal_g}{4} + \Curv_{\mathbf{s}},
    \end{align}
    where \(\Curv_{\mathbf{s}} = \clm(\FullCurv_{E_\mathbf{s}}) = \sum_{j=1}^k \Curv^{(j)}_{s_j}\) with
    \begin{equation} \label{eq:curvature_summand}
        \begin{aligned}
            \Curv^{(j)}_{t} \coloneq& \sum_{1 \leq \alpha < \beta \leq n_j}\clm(\epsilon_{j,\alpha}^{\mathcal{H}} \wedge \epsilon_{j,\beta}^{\mathcal{H}}) \FullCurv^{\nu_j^\ast\tilde{\nabla}_{t}}(\epsilon_{j,\alpha}, \epsilon_{j,\beta}) \\
            \underset{\eqref{eq:pullback_curvature_via_principal-curvatures}}{=}& 2 t (1-t) \sum_{1 \leq \alpha < \beta \leq n_j} \kappa_{j,\alpha} \kappa_{j,\beta}\ \clm(\epsilon_{j,\alpha}^{\mathcal{H}} \wedge \epsilon_{j,\beta}^{\mathcal{H}}) \clms_j(\epsilon_{j,\alpha}) \clms_j(\epsilon_{j,\beta})
        \end{aligned}
    \end{equation}
    Here \(\clms_{j}\) denotes the Clifford action by left multiplication on the \(j\)-th Clifford algebra tensor factor (and the identity on all other factors) with respect to the graded tensor product decomposition 
    \[
        \SpinBdl_{M} \otimes \CCl_{0,(n+l)+(k+l)} = \SpinBdl_{M} \otimes \left(\bigotimes_{j=1}^k \CCl_{0,n_j+1} \right) \otimes \CCl_{0,l+l}.
    \]
    Moreover, \(\clm(\blank)\) in \labelcref{eq:curvature_summand} is to be understood as the Clifford action of tangent vectors and their exterior products on the \(\SpinBdl_M\) tensor factor.
    Thus, with \(\zeta_{j,\alpha,\beta} \coloneq \epsilon_{j,\alpha}^{\mathcal{H}} \wedge \epsilon_{j,\beta}^{\mathcal{H}}\) and \(A_{j,\alpha,\beta} \coloneq \clm(\zeta_{j,\alpha,\beta})\clms_j(\epsilon_{j,\alpha})\clms_j(\epsilon_{j,\beta})\), we obtain that
    \begin{align}
0 &= \int_{M} \abs{\Dirac_{\mathbf{s}} u}^2 \dV \nonumber\\ &
= \int_{M} \abs{\nabla_{\mathbf{s}} u}^2 \dV + \int_{M} \frac{\scal_g}{4} \abs{u}^2 + \innp{\Curv_{\mathbf{s}} u}{u} \dV \nonumber\\
&=  \int_{M} \abs{\nabla_{\mathbf{s}} u}^2 \dV + \frac{1}{2} \int_{M} \abs{u}^2 \sum_{\substack{j \in \{1, \dotsc, k\} \\ 1 \leq \alpha < \beta \leq n_j}} \kappa_{j,\alpha} \kappa_{j,\beta} \left(1 - 4 s_j (1-s_j) \abs{\zeta_{j,\alpha,\beta}}\right) \dV \label{eq:main_rigidity_estimate} \\
&\quad + \frac{1}{4} \int_{M} \abs{u}^2  (\scal_g - \scal_{g_N} \circ \proj_{N} \circ f ) \dV \label{eq:scalar_curvature_defect} \\
&\quad + 2 \int_{M} \sum_{\substack{j \in \{1, \dotsc, k\} \\ 1 \leq \alpha < \beta \leq n_j}} s_j (1-s_j) \kappa_{j,\alpha} \kappa_{j,\beta} \innp{\abs{\zeta_{j,\alpha,\beta}}u + A_{j,\alpha,\beta}u}{u} \dV. \label{eq:clifford_action_defect}
\end{align}
    Since \(f\) is area non-increasing, the component \(p\) is area non-increasing and hence \(\abs{\Lambda^2(\D p)^\top}_{\op}\leq 1\).
    Thus
    \[
        \abs{\zeta_{j,\alpha,\beta}}
        = \abs{\Lambda^2(\D p)^\top(\epsilon_{j,\alpha} \wedge \epsilon_{j,\beta})}
        \leq \abs{\epsilon_{j,\alpha} \wedge \epsilon_{j,\beta}} = 1.
    \] 
    Together with \(0 \leq 4 t (1-t) \leq 1\) for all \(t \in [0,1]\), we see that each summand under the second integral in \labelcref{eq:main_rigidity_estimate} is non-negative.
    The integrand in \labelcref{eq:scalar_curvature_defect} is non-negative by the scalar curvature lower bound in the hypothesis of the theorem.
    The summands of the integrand in \labelcref{eq:clifford_action_defect} are non-negative by the Cauchy--Schwarz inequality because \(\abs{A_{j,\alpha,\beta}} = \abs{\zeta_{j,\alpha,\beta}}\), using that for a simple bivector \(\zeta = \xi_1 \wedge \xi_2\) we have \(\clm(\xi_1 \wedge \xi_2)^2 = -(\abs{\xi_1}^2 \abs{\xi_2}^2 - \innp{\xi_1}{\xi_2}^2) \id = -\abs{\xi_1 \wedge \xi_2}^2 \id \).

    We thus conclude that all non-negative summands in \labelcref{eq:scalar_curvature_defect,eq:main_rigidity_estimate,eq:clifford_action_defect} must vanish identically, in particular \(\nabla_{\mathbf{s}} u =0\).
    Thus \(\abs{u} \neq 0\) everywhere and since the \(\kappa_{j,\alpha}\) are strictly positive, we further conclude from vanishing of \labelcref{eq:main_rigidity_estimate} that \(s_j = \frac{1}{2}\) and 
    \begin{equation}
        \label{eq:factorwise-area-equality}
        \abs{\epsilon_{j,\alpha}^{\mathcal{H}} \wedge \epsilon_{j,\beta}^{\mathcal{H}}} = \abs{\zeta_{j,\alpha,\beta}} = 1
    \end{equation}
    for all \(j \in \{1, \dotsc, k\}\), \(1 \leq \alpha < \beta \leq n_j\).
    Then vanishing of \labelcref{eq:scalar_curvature_defect,eq:clifford_action_defect} also implies
    \begin{gather}
        \scal_g = \scal_{g_N} \circ \proj_N \circ f, \label{eq:scalar_curv_equal}\\
        A_{j,\alpha,\beta}u = -u \quad \forall j \in \{1, \dotsc, k\},\ 1 \leq \alpha < \beta \leq n_j, \label{eq:clifford_two-form_identity}
    \end{gather}
    respectively.
    
    Having established these identities, we can draw the first rigidity conclusion which we note in the proposition below.

    \begin{proposition} \label{prop:submersion_package}
        The map \(p \colon (M,g) \to (N,g_N)\) is a submersion and \(\D(\proj_T \circ f)|_{\mathcal{H}} = 0\).
        Moreover, if \(n \geq 3\), then \(p\) is a Riemannian submersion, and if \(n = 2\), then \(\Lambda^2 \D p|_{\mathcal{H}} \colon \Lambda^2 \mathcal{H} \to \Lambda^2 \T N\) is pointwise an isometry.
    \end{proposition}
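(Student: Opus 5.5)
The plan is to apply \cref{lem:linear-algebra-factorwise-area-rigidity} pointwise at each \(x \in M\). We take
\[
V = \T_x M, \qquad W = \T_{p(x)} N = \bigoplus_{j=1}^k \T_{y_j} N_j, \qquad Z = \T_{\proj_T f(x)} T,
\]
and \(F = \D_x f = (L,K)\) with \(L = \D_x p\) and \(K = \D_x(\proj_T \circ f)\). The orthogonal decomposition is \(W_j = \T_{y_j} N_j\), of dimension \(n_j \geq 2\) for \(j \leq k\). As orthonormal bases \(\epsilon_{j,1}, \dotsc, \epsilon_{j,n_j}\) of \(W_j\) we use the principal-curvature bases already chosen above.

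We then check the two hypotheses of the lemma. First, \(\abs{\Lambda^2 F}_{\op} \leq 1\) holds at every point, since this is exactly the statement that \(f\) is area non-increasing with respect to \(g_N \oplus g_T\). Second, by definition \(\epsilon_{j,\alpha}^{\mathcal{H}} = L^\top \epsilon_{j,\alpha}\), so
\[
\Lambda^2 L^\top(\epsilon_{j,\alpha} \wedge \epsilon_{j,\beta}) = \zeta_{j,\alpha,\beta}.
\]
Equation \labelcref{eq:factorwise-area-equality} says that this has norm \(1\) for all \(j \leq k\) and \(\alpha < \beta\). The key point is that \labelcref{eq:factorwise-area-equality} holds at every point of \(M\), not merely almost everywhere. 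This is because \(\nabla_{\mathbf{s}} u = 0\) forces \(\abs{u}\) to be a non-zero constant, and the vanishing integrand in \labelcref{eq:main_rigidity_estimate} is continuous with strictly positive weights \(\kappa_{j,\alpha}\kappa_{j,\beta}\abs{u}^2\), while \(s_j = \tfrac12\) gives \(1 - 4s_j(1-s_j) = 0\). Hence \(\abs{\zeta_{j,\alpha,\beta}} = 1\) pointwise everywhere. A minor subtlety is that the choice of \(\epsilon_{j,\alpha}\) may fail to be smooth in \(x\), but this is harmless: the lemma is applied pointwise, and the quantity \(\sum \kappa_{j,\alpha}\kappa_{j,\beta}(1 - \abs{\zeta_{j,\alpha,\beta}})\) vanishing at each point is what we use.

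The lemma then yields, at each \(x\), the following conclusions.
\begin{itemize}
\item \(L = \D_x p\) is surjective, so \(p\) is a submersion.
\item \(K|_{(\ker L)^\perp} = 0\), that is, \(\D(\proj_T \circ f)|_{\mathcal{H}} = 0\).
\item If \(n \geq 3\), then \(\D p|_{\mathcal{H}}\) is an isometry onto \(\T N\), which is precisely the statement that \(p\) is a Riemannian submersion.
\item If \(n = 2\), then \(\Lambda^2 \D p|_{\mathcal{H}}\) is an isometry.
\end{itemize}
There is no real obstacle beyond the pointwise everywhere validity just discussed and matching \(\ker(\D p)^\perp = \mathcal{H}\) with \((\ker L)^\perp\), which holds by definition. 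Note also that the case \(k = 0\) is vacuous here, since the proposition concerns \(N\) with \(n \geq 2\).
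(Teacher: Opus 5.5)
Your proposal is correct and is essentially the paper's proof: apply \cref{lem:linear-algebra-factorwise-area-rigidity} pointwise to \(F = (\D p, \D(\proj_T \circ f))\), with its two hypotheses supplied by the area non-increasing property and \labelcref{eq:factorwise-area-equality}. Your extra argument that \labelcref{eq:factorwise-area-equality} holds at every point, which the paper does not spell out, is a welcome refinement, with one correction: the continuous, basis-independent quantity is the full integrand \(\tfrac{\scal_g}{4}\abs{u}^2 + \innp{\Curv_{\mathbf{s}} u}{u}\), not the summand in \labelcref{eq:main_rigidity_estimate} alone, which can depend on the choice of eigenbasis where principal curvatures coincide; since that full integrand splits pointwise into non-negative pieces for any such choice, each piece still vanishes everywhere.
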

    \begin{proof}
        In the case \(n = 0\), the proposition is vacuous.
        Otherwise, since \(\zeta_{j,\alpha,\beta} = \Lambda^2(\D p)^\top(\epsilon_{j,\alpha} \wedge \epsilon_{j,\beta})\), identity \labelcref{eq:factorwise-area-equality}, together with the fact that \(f\) is area non-increasing, ensures that the map \((\D p,\D(\proj_T \circ f)) : \T M \to \T N \oplus \T T\) pointwise satisfies the hypotheses of \cref{lem:linear-algebra-factorwise-area-rigidity}, which implies the statement. 
    \end{proof}
    
    Our next goal is to show that \(p\) actually splits the manifold as a product.
    This is a product rigidity statement in the spirit of \textcite{riedler-tony} and we also follow their strategy of first establishing a Ricci curvature identity (compare~\cite[Proposition~5.4]{riedler-tony}).
    \begin{proposition} \label{prop:ricci-curvature-formula}
        If \(n \geq 3\), then
        \begin{equation}
            \Ric_{(M,g)} = \Ric_{(N,g_N)} \circ \hspace{0.5mm}(\D p \otimes \D p). \label{eq:ricci-identity}
        \end{equation}
        
        If \(n = 2\), then
        \begin{equation}
            \Ric_{(M,g)} =  (\Gauss_{(N,g_N)} \circ \hspace{0.8mm} p) \cdot (g \circ P_{\mathcal{H}} \otimes P_{\mathcal{H}}), \label{eq:gauss-identity}
        \end{equation}
        where \(P_{\mathcal{H}} \colon \T M \to \mathcal{H}\) denotes the orthogonal projection onto the horizontal bundle.
        
        If \(n = 0\), then \(\Ric_{(M,g)} = 0\).
    \end{proposition}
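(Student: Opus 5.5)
We derive the Ricci identity from the parallel spinor $u$ with $\nabla_{\mathbf{s}} u = 0$, following the Riedler--Tony strategy. Since $u$ is parallel for $\nabla_M \otimes \nabla_{E_{\mathbf{s}}}$, its curvature annihilates it: for all $X, Y \in \T M$,
\[
    \bigl(R^{\SpinBdl_M}(X,Y) \otimes \id + \id \otimes \FullCurv_{E_{\mathbf{s}}}(X,Y)\bigr) u = 0 .
\]
Contracting with Clifford multiplication, that is, applying $\sum_i \clm(e_i)$ with $Y = e_i$ for an orthonormal frame $(e_i)$ of $\T_xM$, and using the standard identity $\sum_i \clm(e_i) R^{\SpinBdl_M}(X,e_i) = -\tfrac12 \clm(\Ric(X))$, we obtain
\[
    \tfrac12\, \clm(\Ric(X))\, u = \sum_i \clm(e_i)\, \FullCurv_{E_{\mathbf{s}}}(X,e_i)\, u .
\]

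Next we evaluate the right-hand side using \labelcref{eq:pullback_curvature_via_principal-curvatures} with $s_j = \tfrac12$. We have $\FullCurv_{E_{\mathbf{s}}}(X,Y) = \sum_{j \le k} \tfrac12 \sum_{\alpha<\beta} \kappa_{j,\alpha}\kappa_{j,\beta}\, (\epsilon_{j,\alpha}^\flat \wedge \epsilon_{j,\beta}^\flat)(\D p X, \D p Y)\, \clms_j(\epsilon_{j,\alpha})\clms_j(\epsilon_{j,\beta})$; the circle factors contribute nothing. For $n \ge 3$, \cref{prop:submersion_package} says $p$ is a Riemannian submersion, so we choose the frame $e_{j,\alpha} = \epsilon_{j,\alpha}^{\mathcal{H}}$ on $\mathcal{H}$, completed by a vertical frame, on which $\D p$ vanishes. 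The sum then becomes $\tfrac12 \sum_{j,\beta\neq\alpha} \kappa_{j,\alpha}\kappa_{j,\beta} X_{j,\alpha}\, \clm(e_{j,\beta})\clms_j(\epsilon_{j,\alpha})\clms_j(\epsilon_{j,\beta}) u$, up to signs, where $X_{j,\alpha} = \langle \D p X, \epsilon_{j,\alpha}\rangle$. We now use the algebraic identity \labelcref{eq:clifford_two-form_identity}, $\clm(e_{j,\alpha})\clm(e_{j,\beta})\clms_j(\epsilon_{j,\alpha})\clms_j(\epsilon_{j,\beta}) u = -u$. Since the two Clifford actions (super)commute, this converts $\clm(e_{j,\beta})\clms_j(\epsilon_{j,\alpha})\clms_j(\epsilon_{j,\beta})u$ into $\pm\clm(e_{j,\alpha})u$, with the sign fixed by the relation and $\clm(e)^2 = -1$. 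The right-hand side is therefore $\tfrac12 \clm\bigl(\sum_{j,\alpha} \kappa_{j,\alpha}\bigl(\sum_{\beta\ne\alpha}\kappa_{j,\beta}\bigr) X_{j,\alpha}\, e_{j,\alpha}\bigr) u$. Since $\Ric_N(\epsilon_{j,\alpha}) = \kappa_{j,\alpha}\sum_{\beta\neq\alpha}\kappa_{j,\beta}\,\epsilon_{j,\alpha}$ by the Gauss equation, this equals $\tfrac12 \clm\bigl((\D p)^\top \Ric_N(\D p X)\bigr)u$.

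Because $u$ is parallel it vanishes nowhere, and Clifford multiplication by a nonzero vector is injective. Hence $\Ric(X) = (\D p)^\top \Ric_N \D p X$, which is \labelcref{eq:ricci-identity}.

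For $n = 2$ there is a single factor with $\kappa_1\kappa_2 = \Gauss_N$. \cref{prop:submersion_package} only gives that $\Lambda^2 \D p$ is an isometry on $\mathcal{H}$, so we work with an orthonormal frame $e_1, e_2$ of $\mathcal{H}$ with $\D p(e_1) \wedge \D p(e_2) = \epsilon_1\wedge\epsilon_2$. The curvature is then $\FullCurv(X,Y) = \tfrac12 \Gauss_N\, (e_1^\flat\wedge e_2^\flat)(X,Y)\,\clms(\epsilon_1)\clms(\epsilon_2)$, and the same computation gives $\Ric = \Gauss_N \circ p \cdot g(P_{\mathcal{H}}\cdot, P_{\mathcal{H}}\cdot)$. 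For $n = 0$ the curvature term vanishes, so $\clm(\Ric X)u = 0$ and thus $\Ric = 0$.

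The main obstacle is the careful Clifford sign and grading bookkeeping. The graded tensor product makes $\clm$ and $\clms_j$ anticommute on odd elements, so we must check that the resulting coefficient is exactly $+\tfrac12\Ric_N$, with no sign flip and no stray factor. We will first verify the constants against the trace: taking traces must reproduce \labelcref{eq:scalar_curv_equal}.
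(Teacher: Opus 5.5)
Your proposal is correct and follows essentially the same route as the paper. Both arguments contract the curvature identity for the $\nabla_{\mathbf{s}}$-parallel section $u$ via the spinorial Ricci formula, use that the twisting curvature vanishes on $\mathcal{V}$ and that $s_j = \tfrac12$, rewrite $\clms_j(\epsilon_{j,\alpha})\clms_j(\epsilon_{j,\beta})u = \clm(e_{j,\alpha})\clm(e_{j,\beta})u$ using \labelcref{eq:clifford_two-form_identity}, and finish with the Gauss equation and injectivity of Clifford multiplication. The sign bookkeeping you defer does work out: the even element $\clms_j(\epsilon_{j,\alpha})\clms_j(\epsilon_{j,\beta})$ commutes with $\clm$, so $\clm(e_{j,\beta})\clms_j(\epsilon_{j,\alpha})\clms_j(\epsilon_{j,\beta})u = +\clm(e_{j,\alpha})u$ for every $\alpha \neq \beta$, and no stray sign or factor appears.
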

    \begin{proof}
        We will write \(\Ric^{\sharp}\) for the Ricci curvature as a \((1,1)\)-tensor.
        Then \labelcref{eq:ricci-identity,eq:gauss-identity} are equivalent to
        \begin{equation}
            \Ric_M^\sharp(X) = \left( \Ric^\sharp_N(\D p(X)) \right)^{\mathcal{H}} \label{eq:ricci-sharped-identity}
        \end{equation}
        and
        \begin{equation}
            \Ric_M^\sharp(X) = \Gauss_{N}(p(x))\cdot P_{\mathcal{H}}(X), \label{eq:gauss-sharped-identity}
        \end{equation}
        respectively, for all \(x \in M\), \(X \in \T_{x} M\).
        These are the conditions we will verify.
        
        Indeed, fix \(X \in \T_x M\).
        Then for each \(j \in \{1, \dotsc, k\}\) consider again the principal curvature directions \(\epsilon_{j,\alpha} \in \T_{\proj_{N_j}(p(x))} N_j \subset \T_{p(x)} N\) of \(N_j\), \(1 \leq \alpha \leq n_j\); that is, the eigenbasis of the shape operator \(\Shape_j = \D \nu_j\) chosen above.
        In the case \(n \geq 3\) we define \(e_{j,\alpha} \coloneq \epsilon_{j,\alpha}^{\mathcal{H}} \in \mathcal{H}_x\) for \(j \in \{1, \dotsc, k\}\) and \(1 \leq \alpha \leq n_j\). By \cref{prop:submersion_package}, \(p\) is a Riemannian submersion if $n \geq 3$, and hence \(\{e_{j,\alpha}\}\) forms an orthonormal basis of \(\mathcal{H}_x\). In the case \(n = 2\), we necessarily have \(k=1\) and \(n_1=2\). We then fix an orthonormal basis \(e_{1,1}, e_{1,2} \in \mathcal{H}_x\) of \(\mathcal{H}_x\) such that \(e_{1,1} \wedge e_{1,2} = \zeta_{1,1,2} = \epsilon_{1,1}^{\mathcal{H}} \wedge \epsilon_{1,2}^{\mathcal{H}}\), which is possible by \labelcref{eq:factorwise-area-equality}.
        Using the Ricci curvature formula in terms of the spinorial connection~\cite[Corollary~2.8]{spinorial-approach-riemannian}, the fact that \(u\) is \(\nabla_{\mathbf{s}}\)-parallel, and that the twisting curvature \(\FullCurv_{E_{\mathbf{s}}}\) vanishes on vertical vectors by construction, we obtain for \(n \geq 2\), the formula 
        \begin{equation*}
            \begin{aligned}
                \clm&(\Ric_M^\sharp(X)) u =\\
             &= 2 \sum_{\substack{j \in \{1, \dotsc, k\} \\ 1 \leq \alpha \leq n_j}} \clm(e_{j,\alpha}) \FullCurv_{E_{\mathbf{s}}}(X, e_{j,\alpha}) u \\
            &= 2 \sum_{\substack{j \in \{1, \dotsc, k\} \\ 1 \leq \alpha, \beta \leq n_j}} g(X, e_{j,\beta}) \clm(e_{j,\alpha})  \FullCurv_{E_{\mathbf{s}}}(e_{j,\beta}, e_{j,\alpha}) u \\
            &= 2 \sum_{\substack{j \in \{1, \dotsc, k\} \\ 1 \leq \alpha < \beta \leq n_j}} g(X, e_{j,\beta}) \clm(e_{j,\alpha}) \FullCurv_{E_{\mathbf{s}}}(e_{j,\beta}, e_{j,\alpha}) + g(X, e_{j,\alpha}) \clm(e_{j,\beta}) \FullCurv_{E_{\mathbf{s}}}(e_{j,\alpha}, e_{j,\beta}) u \\
            &= 2 \sum_{\substack{j \in \{1, \dotsc, k\} \\ 1 \leq \alpha < \beta \leq n_j}} ( - g(X, e_{j,\beta}) \clm(e_{j,\alpha}) + g(X, e_{j,\alpha}) \clm(e_{j,\beta}) )\FullCurv_{E_{\mathbf{s}}}(e_{j,\alpha}, e_{j,\beta}) u \\
            &= \sum_{\substack{j \in \{1, \dotsc, k\} \\ 1 \leq \alpha < \beta \leq n_j}} \kappa_{j,\alpha} \kappa_{j,\beta}  ( g(X, e_{j,\alpha}) \clm(e_{j,\beta}) - g(X, e_{j,\beta}) \clm(e_{j,\alpha})) \clms_j(\epsilon_{j,\alpha}) \clms_j(\epsilon_{j,\beta}) u,
            \end{aligned}
        \end{equation*}
        where in the last step we have expressed the twisting curvature using \labelcref{eq:pullback_curvature_via_principal-curvatures} and \(s_j = 1/2\) for \(1 \leq j \leq k\).
        
        In the edge case \(n = 0\), the spinorial curvature formula~\cite[Corollary~2.8]{spinorial-approach-riemannian} gives \(\clm(\Ric_{M}^\sharp(X)) u = 0\) directly and thus \(\Ric^\sharp_M(X) = 0\) by invertibility of the Clifford action, thereby already concluding the case \(n = 0\).
        
        Continuing with \(n \geq 2\), we note that \labelcref{eq:clifford_two-form_identity} implies
        \[
            \clms_j(\epsilon_{j,\alpha}) \clms_{j}(\epsilon_{j,\beta}) u = \clm(\zeta_{j,\alpha,\beta}) u = \clm(e_{j,\alpha}) \clm(e_{j,\beta}) u
        \]
        for all \(j \in \{1, \dotsc, k\}\) and all \(1 \leq \alpha < \beta \leq n_j\).
        Thus, we compute
        \begin{align}
            \clm&(\Ric_M^\sharp(X)) u = \nonumber\\
            &= \sum_{\substack{j \in \{1, \dotsc, k\} \\ 1 \leq \alpha < \beta \leq n_j}} \kappa_{j,\alpha} \kappa_{j,\beta}  ( g(X, e_{j,\alpha}) \clm(e_{j,\beta}) - g(X, e_{j,\beta}) \clm(e_{j,\alpha})) \clm(e_{j,\alpha}) \clm(e_{j,\beta}) u \notag \\
            &= \sum_{\substack{j \in \{1, \dotsc, k\} \\ 1 \leq \alpha < \beta \leq n_j}} \kappa_{j,\alpha} \kappa_{j,\beta}
                ( g(X, e_{j,\alpha}) \clm(e_{j,\alpha}) + g(X, e_{j,\beta}) \clm(e_{j,\beta})) u \notag \\
            &= \clm \Big( \sum_{\substack{j \in \{1, \dotsc, k\} \\ 1 \leq \alpha < \beta \leq n_j}}
            \kappa_{j,\alpha} \kappa_{j,\beta} \left( g(X, e_{j,\alpha}) e_{j,\alpha} + g(X, e_{j,\beta}) e_{j,\beta} \right) \Big) u. \label{eq:case-distinction-breakpoint}\\
            \intertext{Now we first treat the case \(n \geq 3\), where \(p\) is a Riemannian submersion and \(e_{j,\alpha} = \epsilon_{j,\alpha}^{\mathcal{H}}\), so we further obtain that}
            &= \clm \Big( \sum_{\substack{j \in \{1, \dotsc, k\} \\ 1 \leq \alpha < \beta \leq n_j}}
            \kappa_{j,\alpha} \kappa_{j,\beta} \left( g_{N}(\D p(X), \epsilon_{j,\alpha}) e_{j,\alpha} + g_{N}(\D p(X), \epsilon_{j,\beta}) e_{j,\beta} \right) \Big) u \notag \\
            &= \clm \Big(  \big( \sum_{\substack{j \in \{1, \dotsc, k\} \\ 1 \leq \alpha < \beta \leq n_j}}
            \kappa_{j,\alpha} \kappa_{j,\beta} \left( g_{N}(\D p(X), \epsilon_{j,\alpha}) \epsilon_{j,\alpha} + g_{N}(\D p(X), \epsilon_{j,\beta}) \epsilon_{j,\beta} \right) \big)^{\mathcal{H}}\Big) u. \notag
        \end{align}
        Then note that, by the Gauss equation, the Ricci curvature of \((N,g_N)\) can be expressed as
        \begin{align*}
            \Ric_{N}^\sharp(Y) &=  \sum_{\substack{j \in \{1, \dotsc, k\} \\ 1 \leq \alpha \neq \beta \leq n_j}}
                \kappa_{j,\alpha} \kappa_{j,\beta}\ g_{N}(Y, \epsilon_{j,\alpha}) \epsilon_{j,\alpha}  \\
            &= \sum_{\substack{j \in \{1, \dotsc, k\} \\ 1 \leq \alpha < \beta \leq n_j}}
                \kappa_{j,\alpha} \kappa_{j,\beta} \left( g_{N}(Y, \epsilon_{j,\alpha}) \epsilon_{j,\alpha} + g_{N}(Y, \epsilon_{j,\beta}) \epsilon_{j,\beta} \right).
        \end{align*}
        Thus 
        \[
             \clm(\Ric_M^\sharp(X))u = \clm \Big( \left( \Ric^\sharp_N(\D p(X)) \right)^{\mathcal{H}} \Big) u,
        \]
        which proves \labelcref{eq:ricci-sharped-identity} by invertibility of the Clifford action and pointwise non-vanishing of \(u\).
        This concludes the case \(n \geq 3\).
        
        To finish the case \(n = 2\), we continue from \labelcref{eq:case-distinction-breakpoint}, which then directly simplifies to
        \begin{equation*}
            \begin{aligned}
                \clm(\Ric_M^\sharp(X)) u &= \clm \Big(\kappa_{1,1} \kappa_{1,2}\ (g(X,e_{1,1}) e_{1,1} + g(X, e_{1,2}) e_{1,2}) \Big) u \\
                &= \clm\Big(\Gauss_{N}(p(x))\ P_{\mathcal{H}}(X)\Big) u,
            \end{aligned}
        \end{equation*}
        and thereby analogously verifies \labelcref{eq:gauss-sharped-identity}.
    \end{proof}
    
    Set \(\mathcal{V} \coloneq \ker(\D p)\), so \(\T M = \mathcal{V} \oplus \mathcal{H}\).
    We need to show that both subbundles \(\mathcal{V}\) and \(\mathcal{H}\) are parallel to establish the desired product splitting.
    Here the problem we study is drastically simpler than that of \textcite{riedler-tony} because the map to the torus allows us to utilize the Ricci curvature formula directly in the Bochner method.

    \begin{proposition} \label{prop:vertical_parallel}
        The bundle \(\mathcal{V} = \ker(\D p)\) admits a global frame consisting of parallel vector fields on \(M\).
    \end{proposition}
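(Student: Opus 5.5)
We will produce the frame from the circle factors of the target, using the Bochner method on harmonic $1$-forms together with \cref{prop:ricci-curvature-formula}. Let $q \coloneq \proj_T \circ f \colon M \to T$, and let $\theta_1, \dotsc, \theta_l$ be the angular coordinates on $T = \Sphere^1_{r_1} \times \dotsm \times \Sphere^1_{r_l}$, so that $\D\theta_1, \dotsc, \D\theta_l$ are parallel $1$-forms on $T$. Pulling back gives closed $1$-forms $\eta_i \coloneq f^\ast \proj_T^\ast \D\theta_i$ on $M$. Since $\deg(f) \neq 0$, the map $f^\ast \colon \mathrm{H}^1(N \times T;\R) \to \mathrm{H}^1(M;\R)$ is injective. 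Indeed, for $a \in \mathrm{H}^{1}$ with $f^\ast a = 0$, Poincaré duality gives some $b$ with $a \cup b$ equal to the fundamental class, and then $0 = \langle f^\ast(a\cup b), [M]\rangle = \deg(f)\langle a \cup b, [N\times T]\rangle$ is a contradiction. Hence the classes $[\eta_1], \dotsc, [\eta_l]$ are linearly independent in $\mathrm{H}^1(M;\R)$. Let $h_i$ be the harmonic representative of $[\eta_i]$ given by Hodge theory.

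Next we check that $\Ric_M \geq 0$. The Gauss equation for a strictly convex hypersurface gives $\Ric_{N_j} > 0$ whenever $n_j \geq 2$, and therefore $\Ric_N > 0$ on $\T N$. For $n \geq 3$, \labelcref{eq:ricci-identity} then shows $\Ric_M = \Ric_N(\D p\,\cdot, \D p\,\cdot) \geq 0$. For $n = 2$, \labelcref{eq:gauss-identity} expresses $\Ric_M$ as $\Gauss_N \circ p > 0$ times $g(P_{\mathcal{H}}\cdot, P_{\mathcal{H}}\cdot) \geq 0$. For $n = 0$, $\Ric_M = 0$. The Bochner formula for a harmonic $1$-form $h$,
\[
    0 = \int_M \abs{\nabla h}^2 + \Ric_M(h^\sharp, h^\sharp) \,\dV,
\]
then shows that each $h_i$ is parallel and that $\Ric_M(h_i^\sharp, h_i^\sharp) = 0$ pointwise.

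The vanishing of $\Ric_M(h_i^\sharp, h_i^\sharp)$ places $h_i^\sharp$ in $\mathcal{V}$. For $n \geq 3$ it gives $\Ric_N(\D p(h_i^\sharp), \D p(h_i^\sharp)) = 0$, so $\D p(h_i^\sharp) = 0$ by positivity of $\Ric_N$. For $n = 2$ it gives $P_{\mathcal{H}} h_i^\sharp = 0$ because $\Gauss_N > 0$. For $n = 0$ we have $\mathcal{V} = \T M$ and there is nothing to show. Thus the $h_i^\sharp$ are parallel sections of $\mathcal{V}$.

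They are pointwise linearly independent: if $\sum c_i h_i^\sharp$ vanished at one point, this parallel field would vanish identically, and then $\sum c_i [\eta_i] = 0$, forcing all $c_i = 0$. By \cref{prop:submersion_package}, $p$ is a submersion, so $\operatorname{rank} \mathcal{V} = (n+l) - n = l$. Hence $h_1^\sharp, \dotsc, h_l^\sharp$ form a global parallel frame of $\mathcal{V}$.

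The step that needs the most care is the cohomological independence of the $[\eta_i]$ and its conversion into pointwise independence. The device that makes this work is parallelism combined with the Hodge isomorphism: distinct cohomology classes give distinct harmonic forms, and a parallel form that vanishes at one point vanishes everywhere.
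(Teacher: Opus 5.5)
Your proposal is correct and follows essentially the same route as the paper. You take harmonic representatives of the pulled-back circle classes, combine the Bochner formula with the Ricci identities of \cref{prop:ricci-curvature-formula} to get parallelism and verticality, and conclude with a rank count via \cref{prop:submersion_package}. The only cosmetic difference is that you obtain cohomological independence from injectivity of \(f^\ast\) on \(\mathrm{H}^1(\blank;\R)\) via Poincaré duality, whereas the paper reads it off directly from the non-vanishing of the pullback of the volume class of \(N\) wedged with the circle classes.
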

    \begin{proof}
        For each \(i \in \{1, \dotsc, l\}\), let \(t_i = \proj_{\Sphere^1_{r_i}}^\ast (\D \theta) \in \HZ^1(N \times T; \R)\) be the de Rham cohomology class corresponding to the \(i\)-th circle factor in \(T = \Sphere^1_{r_1} \times \dotsm \times \Sphere^1_{r_l}\).
        Since \(f \colon M \to N \times T\) has non-zero degree, it follows that \(f^\ast(\vol_{N} \wedge t_1 \wedge \dotsc \wedge t_l) \neq 0 \in \HZ^{n+l}(M;\R)\).
        In particular, \(f^\ast t_1 \wedge \dotsc \wedge f^\ast t_l \neq 0 \in \HZ^{l}(M; \R)\).
        Let \(\tau_i \in \Omega^1(M)\) be the unique harmonic representative of the cohomology class \(f^\ast t_i \in \HZ^1(M;\R)\) for  \(i = 1, \dotsc, l\).
        In the case \(n \geq 3\), the Bochner formula and \labelcref{eq:ricci-identity} imply that
        \begin{align*}
        0 &= \int_{M} \abs{\nabla \tau_i}^2 + \Ric_{M}(\tau_i^\sharp, \tau_i^\sharp) \dV \\
         &= \int_{M} \abs{\nabla \tau_i}^2 + \Ric_{N}(\D p(\tau_i^\sharp), \D p(\tau_i^\sharp)) \dV \\
        &\geq \int_{M} \abs{\nabla \tau_i}^2 + C_N \abs{\D p(\tau_i^\sharp)}^2 \dV \geq 0,
        \end{align*}
        for some \(C_N > 0\) using that \(N\) has positive Ricci curvature.
        Therefore \(\nabla \tau_i = 0\) and \(\D p(\tau_i^\sharp) = 0\) for all \(i = 1, \dotsc, l\).
        In the case \(n = 2\), we obtain the same consequence by the analogous argument using the Bochner formula and \labelcref{eq:gauss-identity}.
        For \(n = 0\), we just have that \(M\) is Ricci flat, \(\mathcal{V} = \T M\), and the Bochner formula still shows that \(\nabla \tau_i = 0\) for all \(i = 1, \dotsc, l\).
        
        We conclude in all cases that \(\tau_1^\sharp, \dotsc, \tau_l^\sharp\) are parallel vector fields which lie pointwise in the subbundle \(\mathcal{V}\).
        Since \(\tau_1 \wedge \dotsc \wedge \tau_l\) represents a non-vanishing cohomology class, this means that \(\tau_1, \dotsc, \tau_l\) are linearly independent, and since they are parallel, they are even pointwise linearly independent.
        Therefore, for dimension reasons, \(\tau_1^\sharp, \dotsc, \tau_l^\sharp\) span the bundle \(\mathcal{V}\) and the proposition is proved.
    \end{proof}
    
    We are now ready to complete the proof of \cref{thm:general-product-splitting}.
    In the edge case \(n = 0\), \cref{prop:vertical_parallel} implies that \((M,g)\) admits a global parallel frame, so it is a flat torus and the desired statement is already proved.
    So let us assume \(n \geq 2\).
    First note that \(p = \proj_N \circ f \colon M \to N\) is a surjective submersion between closed manifolds, so it is a smooth fiber bundle by Ehresmann's theorem.
    Moreover, the horizontal bundle \(\mathcal{H} = \mathcal{V}^\perp\) is a parallel subbundle of \(\T M\) since the vertical bundle \(\mathcal{V}\) is.
    In particular, \(\mathcal{H}\) is integrable and thus defines a flat connection on the fiber bundle \(p \colon M \to N\).
    Since \(N\) is simply-connected, its monodromy is trivial and thus parallel transport yields a diffeomorphism \(\Psi \colon N \times F \xrightarrow{\cong} M\), where \(F = p^{-1}(y_0)\) is some fiber, and such that \(p \circ \Psi = \proj_N\), \(\mathcal{H} = \D \Psi(\T N \oplus 0)\) and \(\mathcal{V} = \D \Psi(0 \oplus \T F)\).
    Since \(\mathcal{V}\) and \(\mathcal{H}\) are parallel, the metric splits as a Riemannian product, that is, \(\Psi^\ast g = \proj_{N}^\ast g_{S} \oplus \proj_{F}^\ast g_{F}\), where \(g_S\) and \(g_F\) are metrics on \(N\) and \(F\), respectively.
    By \cref{prop:vertical_parallel}, each fiber \(F\) is totally geodesic and the induced metric \(g_F\) on \(F \subset M\) admits a global frame of parallel vector fields.
    Thus \((F,g_F)\) is a flat torus.
    By \cref{prop:submersion_package}, the map \(\proj_{T} \circ f \colon M \to T\) has vanishing derivative along the horizontal bundle.
    Thus \(\proj_T \circ f \circ \Psi \colon N \times F \to T\) is independent of the \(N\) coordinate and so we can write \(\proj_T \circ f \circ \Psi = h \circ \proj_F,\) for some smooth map \(h \colon F \to T\).
    Together, this means that \(f \circ \Psi = \id_{N} \times h\) and so \(0 \neq \deg(f) = \deg(h)\).
    
    We set \(\Phi \coloneq \Psi^{-1}\) and in the case \(n = 2\), we define \((S,g_S) \coloneq (N, g_S)\) and \(\phi \coloneq \id_{N}\).
    We now show that these choices satisfy all the desired properties.
    
    First consider again the case \(n \geq 3\).
    Then \(p\) is actually a Riemannian submersion which in our description translates to \(g_S = g_N\) and \(\Phi = \Psi^{-1} \colon (M, g) \to (N \times F, g_{N} \oplus g_{F})\) is a Riemannian isometry.
    
    In the case \(n = 2\), \(p\) is not necessarily a Riemannian submersion, so it does not follow that \(g_S = g_N\), but we have that \(\Lambda^2 \D p|_{\mathcal{H}} \colon \Lambda^2 \mathcal{H} \to \Lambda^2 \T N\) is a bundle isometry, so \(\id_{N} \colon (N, g_S) \to (N, g_{N})\) is area-density-preserving.
    Moreover, as a consequence of \labelcref{eq:scalar_curv_equal}, we conclude that the Gauß curvatures agree, i.e. that \(\Gauss_{(N,g_S)} = \Gauss_{(N,g_N)}\).
    
    To finish the proof, it only remains to verify that \(h\) is \(1\)-Lipschitz in either case.
    This follows because \(\phi \times h = f \circ \Psi \colon (S \times F, g_{S} \oplus g_{F}) \to (N \times T, g_{N} \oplus g_{T})\) is area non-increasing and \(\phi\) is area-preserving.
    Indeed, this means that there exists a \(g_S\)-unit vector \(\eta \in \T_{y} S\) such that \(\abs{\D \phi(\eta)}_{g_N} \geq 1\).
    If \(h\) were not \(1\)-Lipschitz, there would exist a \(g_F\)-unit vector \(\xi \in \T_{x} F\) such that \(\abs{(\D h)(\xi)}_{g_T} > 1\).
    Then 
    \[
        \abs{\D (\phi \times h)(\eta \wedge \xi)} = \abs{\D \phi(\eta) \wedge \D h(\xi)} = \abs{\D \phi(\eta)} \abs{\D h(\xi)} > 1,
    \]
    which is a contradiction to the area non-increasing property of \(\phi \times h\), and hence we may conclude the proof.
\end{proof}

\begin{remark} \label{rem:family-index-comparison}
    In light of the \(n = 0\) case, let us briefly compare this to the existing spinorial approaches to the Geroch conjecture.
    Gromov--Lawson's~\cite{Gromov-Lawson} method uses their notion of \emph{enlargeability} and one could paraphrase it as being a non-sharp version of Llarull's theorem applied to large covers.
    A more structural point of view uses the non-vanishing of the \emph{Rosenberg index}~\cite{Rosenberg:PSCNovikovI} derived from the strong Novikov conjecture for the group \(\Z^l\).
    However, since the group is commutative, this also reduces to a family index (originating in Lusztig’s~\cite{Lusztig} classical approach to the Novikov conjecture).
    In the Clifford-linear setting, this yields a K-theory class \(\alpha_{\Z^l}(\Torus^{l}) \in \K_{l}(\Cstar \Z^l)\) which is the image of the K-homological fundamental class \([\Dirac_{\Torus^l}] \in \K_{l}(\Torus^l)\) under the Novikov assembly map \(\nu \colon \K_{l}(\Torus^l) \xrightarrow{\cong} \K_{l}(\Cstar \Z^l)\).
    Identifying \(\Cstar \Z^l\) via Fourier transform with the continuous functions on the dual torus \(\hat{\Torus}^l = \Hom(\Z^l, \U(1))\), we can view \(\alpha_{\Z^l}(\Torus^{l}) \in \K^{-l}(\hat{\Torus}^l)\).
    More concretely, consider the family of flat complex line bundles \((\mathcal{F}_{\chi} \to \Torus^{l})_{\chi \in \hat{\Torus}^l}\), where each \(\mathcal{F}_{\chi} \to \Torus^l\) is the flat line bundle determined by the character \(\chi \in \hat{\Torus}^l\).
    Then the class \(\alpha_{\Z^l}(\Torus^{l}) \in \K^{-l}(\hat{\Torus}^l)\) is the family index of the \(\CCl_{l,0}\)-Dirac operator on \(\Torus^{l}\) twisted by the family \(\mathcal{F}_{\bullet}\).
    In contrast, the family index class we use in our construction naturally lives in a different formal K-theory degree, namely \(\K^{l}(\Torus^l)\).
    Moreover, since our class is equal to the Bott class, it also should not be expected to be identified with the Rosenberg index via some natural isomorphism.
    This is because \(\alpha_{\Z^l}(\Torus^{l}) = \nu([\Dirac_{\Torus^l}])\) retains the full topological information included in the K-homological fundamental class, whereas our class only sees the \enquote{top degree}.
    The difference would certainly become visible in the real setting (where \(\hat{\Torus}^l\) needs to be interpreted as a Real space in the sense of KR-theory) because of the presence of non-trivial degree \(0\) information coming from the \(\alpha\)-invariant.
\end{remark}

\printbibliography
\end{document}